\newcommand{\eps}{\varepsilon}
\newcommand{\R}{\mathbb{R}}
\newcommand{\Z}{\mathbb{Z}}
\newcommand{\RN}{{\mathbb{R}^N}}
\renewcommand{\le}{\leqslant}
\renewcommand{\ge}{\geqslant}
\renewcommand{\a }{\alpha }
\newcommand{\g }{\gamma }
\newcommand{\n }{\nabla }
\newcommand{\s }{\sigma }
\renewcommand{\t}{\theta}
\renewcommand{\O}{\Omega}
\renewcommand{\S}{\Sigma}
\renewcommand{\L}{\Lambda}
\newcommand{\W}{{\cal W}}
\newcommand{\Ne}{\mathcal{N}}
\renewcommand{\S}{\mathcal{S}}
\newcommand{\N}{\mathbb{N}}
\newcommand{\irn }{\int_{\RN}}
\def\bbm[#1]{\mbox{\boldmath $#1$}}
\newcommand{\beq }{\begin{equation}}
\newcommand{\eeq }{\end{equation}}
\newtheorem{theorem}{Theorem}[section]
\newtheorem{lemma}[theorem]{Lemma}
\newtheorem{definition}[theorem]{Definition}
\newtheorem{proposition}[theorem]{Proposition}
\newtheorem{remark}[theorem]{Remark}
\title{{\bf Quasilinear elliptic equations in $\RN$ \\ via variational methods and \\ Orlicz-Sobolev embeddings
\footnote{The authors are supported by M.I.U.R. - P.R.I.N. ``Metodi
variazionali e topologici nello studio di fenomeni non lineari'' and by G.N.A.M.P.A. Project ``Metodi variazionali e problemi ellittici non lineari"}}}
\author{A. Azzollini \thanks{Dipartimento di Matematica ed Informatica, Universit\`a degli
Studi della Basilicata,  Via dell'Ateneo Lucano 10, I-85100 Potenza,
Italy, e-mail: {\tt antonio.azzollini@unibas.it}}
 \; \& \;
P. d'Avenia\thanks{Dipartimento di Matematica, Politecnico di Bari,
Via E. Orabona 4, I-70125 Bari, Italy, e-mail: {\tt
p.davenia@poliba.it}}
 \; \& \;
A. Pomponio\thanks{Dipartimento di Matematica, Politecnico di Bari,
Via E. Orabona 4, I-70125 Bari, Italy, e-mail: {\tt
a.pomponio@poliba.it}}}
\date{}
\begin{document}

\maketitle

\begin{abstract}
In this paper we prove the existence of a nontrivial non-negative radial solution for
the quasilinear elliptic problem
\begin{equation*}\label{eq}
\left\{
\begin{array}{ll}
-\n \cdot \left[\phi'(|\n u|^2)\n u  \right] +|u|^{\a-2}u =|u|^{s-2} u, & x\in
\RN,
\\
u(x) \to 0 , \quad \hbox{as }|x|\to \infty,
\end{array}
\right.
\end{equation*}
where $N\ge 2$, $\phi(t)$
behaves like $t^{q/2}$ for small $t$ and $t^{p/2}$ for large $t$,
 $1< p<q<N$, $1<\a\le p^* q'/p'$ and $\max\{q,\a\}< s<p^*$, being $p^*=\frac{pN}{N-p}$ and $p'$ and $q'$ the conjugate exponents, respectively, of $p$ and $q$.
Our aim is to approach the problem variationally by using the tools
of critical points theory in an Orlicz-Sobolev space.
A multiplicity result is also given.
\end{abstract}

\section{Introduction}

This paper deals with the following quasilinear
elliptic equation
\begin{equation}\label{eqf}
-\n \cdot \left[\phi'(|\n u|^2)\n u  \right] =f(u) \qquad \hbox{in
} \RN,\; N\ge 2,
\end{equation}
where $\phi \in C^1(\R_+,\R_+)$ has a different growth near zero
and infinity. Such a type of  behaviour occurs, for example, when
$\phi(t)=2[(1+t)^{\frac 12}-1]$. In this case (\ref{eqf}) becomes
\[
-\n \cdot\left( \frac{\n u}{\sqrt{1+|\n u|^2}}  \right)=f(u),
\]
known as the prescribed mean curvature equation or the capillary surface equation.

Such a kind of problems has been deeply
studied in the recent years: existence and non-existence results of solutions decaying to zero at infinity have been proved by \cite{BC,CG,DG,FLS,FIN,KS,NiSe, NiSe2,PS}, among others, under different assumptions on the nonlinearity $f$ and on the function $\phi$.
Moreover, for bounded domains, we recall \cite{CGMS,FN07,MT,NaSu}.

More precisely we are interested in the existence of solutions of
the following quasilinear elliptic problem
\begin{equation}\label{eq}\tag{${\cal P}$}
\left\{
\begin{array}{l}
-\n \cdot \left[\phi'(|\n u|^2)\n u  \right] +|u|^{\a-2 }u=|u|^{s-2} u,\qquad x\in \RN,
\\
u(x) \to 0 , \quad \hbox{as }|x|\to + \infty,
\end{array}
\right.
\end{equation}
where $N\ge 2$, $\phi(t)$ behaves like $t^{q/2}$ for small $t$ and $t^{p/2}$ for large $t$,
$1< p<q<N$, $1<\a\le p^*q'/p'$ and $\max\{q,\a\}<s<p^*=\frac{pN}{N-p}$, being $p'$ and $q'$ the conjugate exponents, respectively, of $p$ and $q$.

Our aim is to approach the problem variationally by using the tools
of critical points theory. A non-trivial difficulty, which
immediately appears, consists in identifying the right functional
setting for the problem.
%
%
Solutions of \eqref{eq} are, at least formally, the critical points of the functional
\[
I(u)=\frac 12 \irn \phi (|\n u|^2)
+\frac 1\a \irn |u|^\a
-\frac 1s \irn |u|^s,
\]
where
\[
\phi (|\n u|^2)\simeq\left\{
\begin{array}{ll}
\displaystyle{ |\n u|^p,}
& \hbox{if }|\n u|\gg 1,
\\
\displaystyle{ |\n u|^q,}
& \hbox{if }|\n u|\ll 1.
\end{array}
\right.
\]
This different growth at zero and at
infinity of the principal part and the unboundedness of the domain
advise us not to use classical Sobolev spaces and to introduce  a new functional framework.
So, we define a sort of Orlicz-Sobolev space with respect to which
the functional is well defined and $C^1$. In
this direction, a first step is to show, by suitable embedding
theorems, that all the parts of the functional are finite and
controlled by the norm of our space. In particular, since there are
power-like nonlinearities, we need to study the embedding of our
space into a Lebesgue ones. At this stage, we look at the results
obtained in \cite{BPR,BF,DS} on the sum of Lebesgue spaces to recover
some useful known properties on our space and prove a fundamental
continuous
embedding theorem.\\
Afterwards we deal with the compactness properties of the
functional. Actually, as in the situation of semilinear elliptic
equations, the main difficulty to get compactness lies in the fact
that in unbounded domains the group of translations constitutes an
obstruction to compact embeddings. To overcome this difficulty, as
in \cite{BL,Str}, we need to constrain the functional to a suitable
space which is not invariant with respect to the translations. In
view of this, we restrict the domain of the functional to the
Orlicz-Sobolev space obtained by density starting from radially
symmetric test functions. Proceeding in analogy with the well known
result due to Strauss \cite{Str}, we are able to get uniformly
decaying estimates that we use to show that our space compactly
embeds into certain Lebesgue spaces. As a consequence it is easy to
check that our functional satisfies the Palais-Smale condition, a
first step in view of the application of the Mountain Pass Theorem.
\\
At this point, we test the geometrical assumptions of Mountain Pass Theorem in this setting and get our goal.\\

In order to state more precisely our results, let
$1< p<q$ and $\phi \in
C^1(\R_+,\R_+)$ be such that
\begin{enumerate}[label=(${\rm \Phi}$\arabic*), ref=${\rm \Phi}$\arabic*]
\item \label{phi1} $\phi(0)=0$;
\item \label{phi2} there exists a positive constant $c$ such that
\[
\left\{
\begin{array}{ll}
c t^{\frac p2} \le \phi(t), & \hbox{if }t\ge 1,
\\
c t^{\frac q2} \le \phi(t), & \hbox{if }0\le t\le 1;
\end{array}
\right.
\]
\item \label{phi3} there exists a positive constant $C$ such that
\[
\left\{
\begin{array}{ll}
\phi(t) \le C t^{\frac p2}, & \hbox{if }t\ge 1,
\\
\phi(t) \le C t^{\frac q2}, & \hbox{if }0\le t\le 1;
\end{array}
\right.
\]
\item \label{phi4} there exists $0<\mu<1$ such that
\[
\phi'(t)t\le \frac{s\mu}{2}\phi(t), \quad \hbox{for all }t\ge 0;
\]
\item \label{phi5} the map $t \mapsto \phi(t^2)$ is strictly convex.
\end{enumerate}

\begin{remark}
Observe that by \eqref{phi4} and \eqref{phi5}, we infer that
\[
\phi(t)<2\phi'(t)t\le s\mu\phi(t), \quad \hbox{for all }t> 0,
\]
and so $s\mu>1$.
\end{remark}

\begin{remark}
As an example, a function that satisfies all the previous
assumptions is
\[
\phi(t)=\frac{2}{p} \left[(1+t^\frac{q}{2})^\frac{p}{q}-1\right].
\]
In this case the problem (\ref{eq}) becomes
\begin{equation}\label{eq:es}
\left\{
\begin{array}{ll}
-\n \cdot \left[(1+|\n u|^q)^{\frac pq -1}|\n u|^{q-2}\n u  \right]
+|u|^{\a-2}u =|u|^{s-2} u, \qquad x\in \RN,
\\
u(x) \to 0 , \quad \hbox{as }|x|\to +\infty.
\end{array}
\right.
\end{equation}
Let us observe that even if Franchi, Lanconelli \& Serrin in \cite{FLS} treat a very general quasilinear equation, Theorem A does not apply to  problem \eqref{eq:es} since, with their notation, in this case we have $H(\infty)=\O(\infty)=F(\g)=\infty$ but $\displaystyle\lim_{u \to \infty} \frac{H^{-1}(F(u))}{u}=\infty$.
\end{remark}

Now we state our main results.

\begin{theorem}\label{main}
Assuming that $1< p<q<\min\{N,p^*\}$,  $1<\a\le p^*q'/p'$,
$\max\{q,\a\}<s<p^*$ and (\ref{phi1}-\ref{phi5}), there exists a
nontrivial non-negative radially symmetric solution of (\ref{eq}).
\end{theorem}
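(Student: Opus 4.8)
The plan is to obtain the solution as a Mountain Pass critical point of $I$ restricted to the radial subspace $\W_r$ of the Orlicz--Sobolev space $\W$, exploiting precisely the compact embeddings that the radial restriction makes available. Since $I$ is invariant under the action of $O(N)$, the Palais principle of symmetric criticality guarantees that any critical point of $I|_{\W_r}$ is a critical point of $I$ on all of $\W$, hence a weak solution of \eqref{eq}. Accordingly I must (i) verify the Mountain Pass geometry, (ii) establish the Palais--Smale condition on $\W_r$, and (iii) recover non-negativity.

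For the geometry, note first $I(0)=0$. For the mountain around the origin I invoke the embedding theorem: for $u$ of small norm the gradient term is bounded below like the $q$-power of the norm (the near-zero behaviour of $\phi$ from \ref{phi2}), the $\alpha$-term is nonnegative, and the subcritical embedding gives $\irn|u|^s\le C\|u\|^s$; since $s>q$ the $s$-term is of higher order, so there exist $\rho,\delta>0$ with $I(u)\ge\delta$ whenever $\|u\|=\rho$. For the descending direction I fix a nonnegative $v\in\W_r\setminus\{0\}$ and look along the ray $tv$: as $t\to+\infty$ the gradient term grows at most like $t^p$ (large-gradient regime, \ref{phi3}) and the $\alpha$-term like $t^\alpha$, while the $s$-term grows like $t^s$ with $s>\max\{p,\alpha\}$; thus $I(tv)\to-\infty$ and I take $e=t_0v$ with $\|e\|>\rho$ and $I(e)<0$.

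The core is the Palais--Smale condition. For a sequence $(u_n)\subset\W_r$ with $I(u_n)\to c$ and $I'(u_n)\to0$, boundedness follows from \ref{phi4}: combining $I(u_n)$ with $\tfrac1s\langle I'(u_n),u_n\rangle$ and using $\phi'(t)t\le\tfrac{s\mu}{2}\phi(t)$ yields
\[
I(u_n)-\tfrac1s\langle I'(u_n),u_n\rangle\ \ge\ \tfrac{1-\mu}{2}\irn\phi(|\nabla u_n|^2)+\Big(\tfrac1\alpha-\tfrac1s\Big)\irn|u_n|^\alpha,
\]
and since $0<\mu<1$ and $\alpha<s$ both coefficients are positive, this bounds $\irn\phi(|\nabla u_n|^2)$ and $\irn|u_n|^\alpha$, hence $\|u_n\|$. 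Passing to a subsequence, $u_n\weakto u$ in $\W_r$, and the radial compact embedding into the relevant Lebesgue spaces gives $u_n\to u$ strongly in $L^\alpha(\RN)$ and $L^s(\RN)$, allowing me to pass to the limit in the lower-order terms of $\langle I'(u_n)-I'(u),u_n-u\rangle\to0$. What remains is the principal part, and here the strict convexity \ref{phi5} makes the operator $u\mapsto-\nabla\cdot[\phi'(|\nabla u|^2)\nabla u]$ strictly monotone; this monotonicity forces $\nabla u_n\to\nabla u$, so $u_n\to u$ strongly in $\W_r$. I expect this last point --- upgrading weak to strong convergence in the principal part of this non-homogeneous $\phi$-Laplacian --- to be the main obstacle, since the standard $p$-Laplacian monotonicity estimates must be adapted to the Orlicz framework where the norm is not homogeneous.

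With geometry and compactness in place, the Mountain Pass Theorem yields a critical point $u$ at level $c>0$, so $u\not\equiv0$. To force $u\ge0$ I run the entire scheme with the nonlinearity replaced by $(u^+)^{s-1}$; testing the resulting equation with the negative part $u^-$ gives
\[
\irn\phi'(|\nabla u^-|^2)|\nabla u^-|^2+\irn|u^-|^\alpha=0,
\]
and since $\phi'>0$ on $(0,\infty)$ (by the Remark following \ref{phi5}) both integrals vanish, whence $u^-=0$ and $u\ge0$. Then $(u^+)^{s-1}=|u|^{s-2}u$, so $u$ solves the original problem, and by symmetric criticality it is the desired nontrivial non-negative radial solution.
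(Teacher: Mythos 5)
Your overall architecture --- Mountain Pass for $I$ restricted to $\W_r$, symmetric criticality to return to $\W$, truncation of the nonlinearity to force non-negativity --- is exactly the paper's. The genuine gaps are in the Palais--Smale step. First, you claim that the radial compact embedding gives $u_n\to u$ strongly in $L^\a(\RN)$; but Theorem \ref{th:compact} yields compactness into $L^\tau(\RN)$ only for $\a<\tau<p^*$, the endpoint $\tau=\a$ excluded, so strong $L^\a$ convergence cannot be obtained from compactness and must be produced another way (and you do need it, since $\|\cdot\|_\a$ is part of the norm of $\W$). Second, and more seriously, your treatment of the principal part stops at the assertion that strict monotonicity of $u\mapsto-\n\cdot[\phi'(|\n u|^2)\n u]$ ``forces $\n u_n\to\n u$''. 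Hypothesis (\ref{phi5}) gives strict convexity of $t\mapsto\phi(t^2)$ but no modulus of convexity, hence no quantitative inequality of the type $\left(\phi'(|\xi|^2)\xi-\phi'(|\eta|^2)\eta\right)\cdot(\xi-\eta)\ge c|\xi-\eta|^p$ that drives the usual $p$-Laplacian argument; from $\langle A_1'(u_n)-A_1'(u),u_n-u\rangle\to0$ one can at best extract a.e.\ convergence of the gradients, and upgrading that to convergence in $L^p(\RN)+L^q(\RN)$ is precisely the missing step, which you yourself flag as ``the main obstacle'' without carrying it out.

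The paper closes both gaps at once with a convexity argument taken from \cite{MR}: writing $I=A-B$ with $A=A_1+A_2$ convex and $B$ the $s$-term, compactness handles $B$ and $B'$, so $A'(u_n)\to B'(u_0)$; then the convexity inequality $A(u_n)\le A(u_0)+A'(u_n)[u_n-u_0]$ together with weak lower semicontinuity sandwiches $A(u_n)\to A(u_0)$, and, separating the two convex pieces, $A_1(u_n)\to A_1(u_0)$ and $A_2(u_n)\to A_2(u_0)$. Strong convergence in $L^\a(\RN)$ then follows from weak convergence plus convergence of the functional values (uniform convexity of $L^\a$), and strong convergence of the gradients in $L^p(\RN)+L^q(\RN)$ from the analogous Radon--Riesz property of that space (\cite[Lemma 2.3]{DS}, resting on the uniform convexity proved in \cite{BPR}). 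If you wish to keep a monotonicity-flavoured proof you must supply that upgrade yourself. Two smaller points on the geometry: on the sphere $\|u\|=\rho$ the gradient part of the norm can be arbitrarily small, so the term $\int_{\L_{\n u}^c}|\n u|^q+\int_{\L_{\n u}}|\n u|^p$ alone does not dominate a power of $\|u\|$ and you must keep $\frac1\a\|u\|_\a^\a$, using $s>\max\{q,\a\}$ as the paper does; and along the ray $tv$ the gradient term grows like $t^q$ (not $t^p$, since $q>p$), which is harmless only because $s>q$.
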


\begin{remark}
By well known results by Pucci, Serrin \& Zou \cite{PSZ,SZ}, if $\a \ge q$, we infer that the solution found is positive; on the contrary it has compact support, if $\a<q$ and requiring in addiction that there exists  $1<\nu\le s\mu$ such that
\[
\frac{\nu}{2}\phi(t) \le  \phi'(t)t,
\]
for $t$ sufficiently small.
\end{remark}

\begin{theorem}\label{infty}
Assuming that $1< p<q<\min\{N,p^*\}$,  $1<\a\le p^*q'/p'$,
$\max\{q,\a\}<s<p^*$ and (\ref{phi1}-\ref{phi5}), there exist
infinitely many radially symmetric solutions of (\ref{eq}).
\end{theorem}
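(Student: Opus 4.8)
The plan is to exploit the even symmetry of the problem and to deduce multiplicity from the $\Z_2$-symmetric version of the Mountain Pass Theorem (in the form due to Ambrosetti and Rabinowitz), whose conclusion is an unbounded sequence of critical values. We work in the same radial Orlicz-Sobolev space $\W$ used for Theorem \ref{main}, on which the functional
\[
I(u)=\frac 12 \irn \phi (|\n u|^2)+\frac 1\alpha \irn |u|^\alpha-\frac 1s \irn |u|^s
\]
is of class $C^1$. Since $\phi(|\n(-u)|^2)=\phi(|\n u|^2)$ and $|{-u}|^\alpha=|u|^\alpha$, $|{-u}|^s=|u|^s$, the functional is even and $I(0)=0$. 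Moreover, the Ambrosetti--Rabinowitz-type condition \ref{phi4} forces Palais--Smale sequences to be bounded, and then the compact embedding of $\W$ into the relevant Lebesgue spaces, proved via the Strauss-type decay estimates, upgrades boundedness to strong convergence, so that $I$ satisfies $(PS)_c$ at every level $c$; this is exactly the ingredient established in the course of proving Theorem \ref{main}, and it is insensitive to the energy level.

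It then remains to check the two geometric hypotheses of the abstract theorem. The first, namely the existence of $\rho,\beta>0$ with $I(u)\ge\beta$ whenever $\|u\|=\rho$, is precisely the mountain pass geometry near the origin verified for Theorem \ref{main}: near $0$ the coercive part $\frac12\irn\phi(|\n u|^2)+\frac1\alpha\irn|u|^\alpha$ dominates the subcritical term $\frac1s\irn|u|^s$, by \ref{phi2} and the continuous embedding theorem. The second hypothesis requires that on every finite-dimensional subspace $\widetilde{E}\subset\W$ one has $I(u)\to-\infty$ as $\|u\|\to\infty$. Here we use \ref{phi3}, which bounds $\frac12\irn\phi(|\n u|^2)$ from above by a sum of a $p$-power and a $q$-power of the gradient, together with the fact that on the finite-dimensional space $\widetilde{E}$ all norms are equivalent. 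Since $\max\{q,\alpha\}<s$ and $p<q$, the negative term $-\frac1s\irn|u|^s$ grows strictly faster than all the positive contributions, forcing $I$ to tend to $-\infty$ on $\widetilde{E}$; consequently $\{u\in\widetilde{E}: I(u)\ge0\}$ is bounded, as required.

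With these facts in hand, the $\Z_2$-symmetric Mountain Pass Theorem produces an unbounded sequence of critical values of $I$, and hence infinitely many distinct critical points (occurring in pairs $\pm u$), each of which is a radially symmetric weak solution of \eqref{eq}. I expect the genuine technical work to be concentrated in the verification of the second geometric condition: one must turn the two-sided growth control on $\phi$ given by \ref{phi2}-\ref{phi3} into a clean comparison between the Orlicz-Sobolev norm and the $L^s$-norm on finite-dimensional subspaces, carefully tracking the different scalings of the $p$-homogeneous and $q$-homogeneous parts of the principal term. The evenness, the $C^1$ regularity and the Palais--Smale condition, by contrast, are either immediate or already available from the analysis carried out for Theorem \ref{main}.
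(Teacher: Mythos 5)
Your proposal follows essentially the same route as the paper: the $\Z_2$-symmetric Mountain Pass Theorem applied to the even functional $I$ on $\W_r$, with the Palais--Smale condition (Proposition \ref{pr:PS}) and the geometry near the origin (Proposition \ref{pr:MP}) imported unchanged from the proof of Theorem \ref{main}. The one place where you diverge --- and where you yourself locate ``the genuine technical work'' --- is the unboundedness of $I$ below on finite-dimensional subspaces, and there the paper uses a simpler device than the one you set up. The version of the symmetric Mountain Pass Theorem it invokes requires only \emph{one} sequence $\{V_n\}_n$ of $n$-dimensional subspaces on which $I\le 0$ outside a ball $B_{R_n}$, so it is free to choose $V_n=\Span\{\varphi_1,\dots,\varphi_n\}$ with the $\varphi_i$ radially symmetric test functions. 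For such $u$ one has $\n u\in L^p(\RN)\cap L^q(\RN)$, so (\ref{phi3}) immediately gives $I(u)\le C\big[\|\n u\|_q^q+\|\n u\|_p^p+\|u\|_\a^\a-\|u\|_s^s\big]$, and the equivalence of all norms on the finite-dimensional space $V_n$ finishes the estimate since $s>\max\{q,\a\}$ and $q>p$. Your formulation on an \emph{arbitrary} finite-dimensional subspace $\widetilde E\subset\W_r$ is genuinely harder: a general $u\in\W_r$ only has $\n u\in L^p(\RN)+L^q(\RN)$, the splitting set $\L_{\n u}$ depends on $u$, and the quantities $\int_{\L_{\n u}}|\n u|^p$ and $\int_{\L_{\n u}^c}|\n u|^q$ produced by (\ref{phi3}) are not norms, so ``equivalence of norms in finite dimensions'' does not apply directly. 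That stronger statement can in fact be salvaged (one can show $\irn\phi(|\n u|^2)\le C\big(\|\n u\|_{p,q}^p+\|\n u\|_{p,q}^q\big)$ for every $u$ by decomposing $\n u$ optimally and estimating $|\L_{\n u}|$), but you have not carried this out, and it is unnecessary: adopting the paper's choice of test-function subspaces closes your argument with no further work.
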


\begin{remark}
Any couple $(p,q)$ in the interior of the coloured region in Figure \ref{fig:f} is admissible for our problem.
\begin{figure}[H]
\centering
\includegraphics[height=5cm]{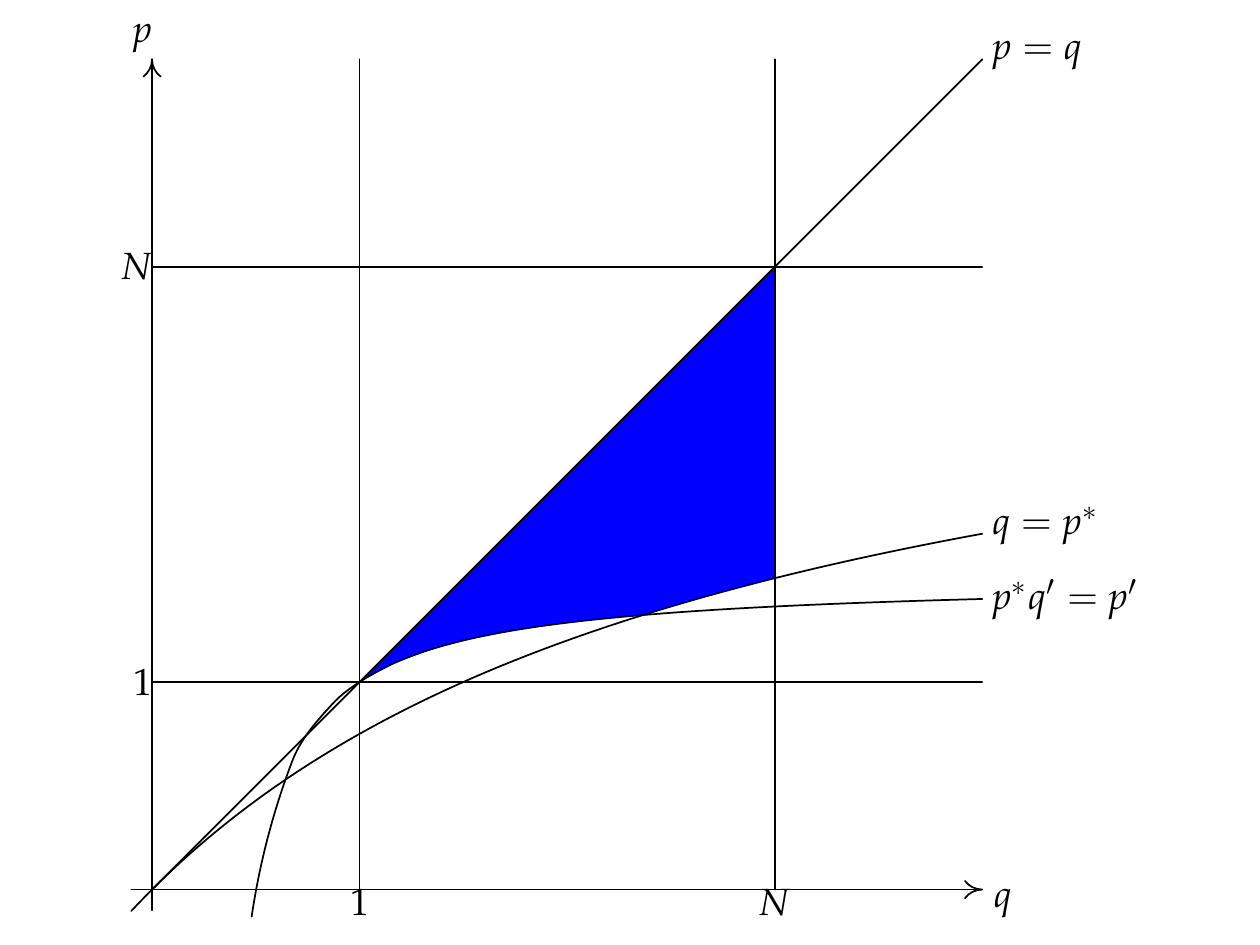}
\\
{\footnotesize \caption{Admissible $(p,q)$.}\label{fig:f}}
\end{figure}
\end{remark}

\begin{remark}
Theorem \ref{main} holds also if in the right hand side of (\ref{eq}), instead of a pure power nonlinearity, we consider a more general one which satisfies the Ambrosetti-Rabinowitz growth condition. More precisely, using slightly modified arguments, we can treat the following problem
\begin{equation*}
\left\{
\begin{array}{ll}
-\n \cdot \left[\phi'(|\n u|^2)\n u  \right] +|u|^{\a-2}u =f(u), & x\in \RN,
\\
u(x) \to 0 , \quad \hbox{as }|x|\to \infty,
\end{array}
\right.
\end{equation*}
where $f \in C(\R,\R)$ satisfies
\begin{enumerate}[label=({\rm f\arabic*}), ref={\rm f\arabic*}]
\item $f(t)=o(t^{\a-1})$, as $t\to 0^+$,
\item $f(t)=o(t^{p^*-1})$, as $t\to +\infty$,
\item if $F(t)=\int_0^t f(z)dz$, there exists $\t>\a$ such that
\[
0<\t F(t)\le f(t)t, \quad \hbox{for all }t> 0,
\]
\item  $\displaystyle \liminf_{t\to+\infty}\frac{f(t)}{t^{q-1}}>0$, if $\a<q$.
\end{enumerate}
and $\phi \in C^1(\R_+,\R_+)$ satisfies  (\ref{phi1}), (\ref{phi2}), (\ref{phi3}), (\ref{phi5}) and
\begin{enumerate}[label=(${\rm \Phi\arabic*'}$), ref=${\rm \Phi\arabic*'}$]
\setcounter{enumi}{3}
\item there exists $0<\mu<1$ such that
\[
2\phi'(t)t \le \t\mu\phi(t), \quad \hbox{for all }t\ge 0.
\]
\end{enumerate}
Theorem \ref{infty} holds requiring also that
\begin{enumerate}[label=({\rm f\arabic*}), ref={\rm f\arabic*}]
\setcounter{enumi}{4}
\item $f$ is odd.
\end{enumerate}
\end{remark}

The paper is organized in the following way.\\
In Section \ref{Antoniofigo} we introduce the functional framework
and list some fundamental properties of the space. In particular in
this part we study the relation between our space and the classical
Lebesgue spaces and provide new continuous and compact embedding
theorems.\\
In Section
\ref{SefossiunafemminabonaiomitrombereiAntonioinunfienile} we verify
that the functional has a good geometry and compactness to apply both the classical Mountain Pass Theorem and its $\Z_2$-symmetric version. We also show that, strengthening a little bit our assumptions, we are able to prove the existence of a ground state solution in the set of all the radially symmetric solutions.

\subsection*{Notation}
\begin{itemize}

\item $\mathbb{K}=\R$ or $\mathbb{K}=\RN$ according to the case.
\item If $r>0$, we denote by $B_r$ the ball of center $0$ and radius $r$.
\item If $\O\subset \RN$, then $\O^c=\RN \setminus \O$.
\item Everytime we consider a subset of $\RN,$ we assume it is measurable and we denote by $|\cdot|$ its measure.
\item If $\O\subset\RN$, $\tau \ge 1$ and $m\in\N^*$,  we denote by $L^\tau (\O)$ the Lebesgue space $L^\tau (\O,\mathbb{K})$, by $\|\cdot\|_{L^\tau(\O)}$ its norm ($\|\cdot\|_\tau$ if $\O=\RN$) and by $W^{m,\tau}(\O)$ the usual Sobolev spaces.

\end{itemize}

\section{The functional setting}\label{Antoniofigo}
This section is devoted to the construction of the functional setting.
\\
As a first step, we have to recall some well known facts on the sum of Lebesgue spaces.
\begin{definition}
Let $1<p<q$ and $\O\subset \RN$.
We denote with $L^p(\O)+L^q(\O)$ the completion of $C_c^\infty (\O,\mathbb{K})$ in the norm
\begin{equation}
\label{eq:pq}
\| u\|_{L^p(\O)+L^q(\O)}=\inf\left\{\| v\|_p+\|  w\|_q\;\vline\; v \in L^p(\O), w\in L^q(\O), u= v+w\right\}.
\end{equation}
We set $\| u\|_{p,q}=\|u\|_{L^p(\RN)+L^q(\RN)}$.
\end{definition}

The spaces $L^p(\O)+L^q(\O)$ are extensively studied in \cite[Section 2]{BPR}, where a slightly different definition is given, but it can be easily shown that the two definitions are equivalent. Moreover, in \cite{BPR}, it has been shown that $L^p(\O)+L^q(\O)$ are Orlicz spaces.

In the next proposition we give a list of properties that will be useful in the rest of the paper.

\begin{proposition}\label{pr:lplq}
Let $\O\subset\RN$, $u\in L^p(\O)+L^q(\O)$ and $\Lambda_{u}=\left\{ x\in\O \; \vline \; | u(x)| > 1 \right\}$. We have:
\begin{enumerate}[label=({\rm \roman*}), ref=({\rm \roman*})]
\item \label{en:op} if $\O'\subset\O$ is such that $|\O'|<+\infty$, then $u\in L^p(\O')$;
\item \label{en:oinf} if $\O'\subset\O$ is such that $u\in L^{\infty}(\O')$, then $u\in L^q(\O')$;
\item \label{en:lu} $|\Lambda_{u}|<+\infty$;
\item \label{en:ulu} ${u} \in L^p(\Lambda_{u}) \cap L^q(\Lambda^c_{u})$;
\item \label{en:min} the infimum in (\ref{eq:pq}) is attained;
\item \label{en:dual} $L^p(\O)+L^q(\O)$ is reflexive and $(L^p(\O)+L^q(\O))'=L^{p'}(\O)\cap L^{q'}(\O)$;
\item \label{en:lupq} $\| u \|_{L^p(\O)+L^q(\O)} \le \max \left\{\| u\|_{L^p(\Lambda_{\bf u})}, \| u\|_{L^q(\Lambda^c_{\bf u})} \right\}$;
\item \label{en:ooc} if $B\subset\O$, then  $\| u\|_{L^p(\O)+L^q(\O)}\le \| u\|_{L^p(B)+L^q(B)}+\| u\|_{L^p(\O\setminus B)+L^q(\O\setminus B)}$.
\end{enumerate}
\end{proposition}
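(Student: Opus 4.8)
The plan is to take the truncation statement \ref{en:ulu} as the structural cornerstone, deduce the three integrability properties \ref{en:op}--\ref{en:lu} from it by elementary measure theory, and then handle \ref{en:min}--\ref{en:ooc} by the direct method together with the identification of $L^p(\O)+L^q(\O)$ as an Orlicz space carried out in \cite{BPR}.

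First I would prove \ref{en:ulu}, which I regard as the heart of the matter. Writing $u=v+w$ with $v\in L^p(\O)$ and $w\in L^q(\O)$ (the algebraic description of the sum space, equivalent to the completion definition by \cite{BPR}), I would show $u\chi_{\Lambda_u}\in L^p$ and $u\chi_{\Lambda_u^c}\in L^q$ separately. For the first, on $\Lambda_u$ one has $|u|>1$, hence $|v|>1/2$ or $|w|>1/2$; Chebyshev's inequality applied to $v\in L^p$ and $w\in L^q$ gives $|\Lambda_u|<\infty$, and then $\int_{\Lambda_u}|u|^p\le 2^{p-1}\int_{\Lambda_u}(|v|^p+|w|^p)$ is finite because $\int_{\Lambda_u}|w|^p$ splits into the part $\{|w|>1\}$, where $|w|^p\le|w|^q$, and the finite-measure part $\{|w|\le1\}\cap\Lambda_u$. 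For the second conclusion, the delicate one, I would split $\Lambda_u^c$ according to whether $|w|\ge|u|/2$ or $|w|<|u|/2$: on the former $|u|\le2|w|$ yields control by $\|w\|_q$, while on the latter $|v|>|u|/2$ but, crucially, $|u|\le1$ forces $|v|$ to be bounded, so the merely-$L^p$ information on $v$ upgrades to an $L^q$ bound and one controls $\int|u|^q$ by $\|v\|_p$. This last step — turning the $L^p$ summand into an $L^q$ estimate on the region of small values — is the main obstacle and the place where membership in the sum space is genuinely used.

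With \ref{en:ulu} in hand, \ref{en:lu} is immediate since $|\Lambda_u|\le\int_{\Lambda_u}|u|^p<\infty$, and \ref{en:op} and \ref{en:oinf} follow by splitting the relevant integral over $\Lambda_u$ and $\Lambda_u^c$: on $\Lambda_u$ one uses finiteness of the measure together with \ref{en:ulu}, while on $\Lambda_u^c$ one uses $|u|\le1$ (for \ref{en:op}, where $|\O'|<+\infty$) or again \ref{en:ulu} (for \ref{en:oinf}). Property \ref{en:ooc} is a direct consequence of the infimum definition of the norm: given near-optimal decompositions on $B$ and on $\O\setminus B$, I would extend them by zero and add them, using disjointness of supports and $(a^p+b^p)^{1/p}\le a+b$ to bound the $L^p$ and $L^q$ norms of the glued summands.

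Finally, for \ref{en:min} I would run the direct method: a minimizing sequence $(v_n,w_n)$ for the infimum in \eqref{eq:pq} is bounded in the reflexive spaces $L^p(\O)$ and $L^q(\O)$, so up to subsequences $v_n\weakto v$ and $w_n\weakto w$; passing to the limit in $v_n+w_n=u$ identifies $u=v+w$, and weak lower semicontinuity of the norms shows this decomposition attains the infimum. Properties \ref{en:dual} and \ref{en:lupq} I would obtain from the Orlicz-space structure of \cite{BPR}: reflexivity and the duality $(L^p(\O)+L^q(\O))'=L^{p'}(\O)\cap L^{q'}(\O)$ follow from the classical duality of a compatible Banach couple, $(X+Y)'=X'\cap Y'$ with the intersection carrying the max-norm, together with the reflexivity of $L^p$ and $L^q$ (equivalently, the $\Delta_2$-condition for the generating Young function and its conjugate, since $1<p<q<\infty$); and the sharp estimate \ref{en:lupq} follows from the Luxemburg-norm representation of $\|\cdot\|_{L^p(\O)+L^q(\O)}$ applied to the canonical truncation $u=u\chi_{\Lambda_u}+u\chi_{\Lambda_u^c}$ supplied by \ref{en:ulu}.
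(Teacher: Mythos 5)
Your proposal is considerably more ambitious than the paper's own proof: the authors prove only item \ref{en:ooc} and refer to \cite[Section 2]{BPR} for \ref{en:op}--\ref{en:lupq}, whereas you reconstruct everything from scratch. For \ref{en:ooc} your argument is exactly the paper's (they glue the \emph{exact} minimizers supplied by \ref{en:min}, you glue $\eps$-almost-optimal decompositions; both work, and your variant does not even need \ref{en:min} as a prerequisite). Your self-contained treatments of \ref{en:ulu}, \ref{en:op}, \ref{en:oinf}, \ref{en:lu}, \ref{en:min} and \ref{en:dual} are correct: in particular the key mechanism in \ref{en:ulu} --- on the portion of $\Lambda_u^c$ where $|v|>|u|/2$ one has $|u|^q\le|u|^p\le 2^p|v|^p$ because $|u|\le 1$ and $q>p$ --- is the right one, and the direct method for \ref{en:min} and the couple duality plus quotient-of-reflexive argument for \ref{en:dual} are sound.

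The one genuine gap is \ref{en:lupq}. Inserting the canonical truncation $u=u\chi_{\Lambda_u}+u\chi_{\Lambda_u^c}$ into the definition \eqref{eq:pq} yields
\[
\|u\|_{L^p(\O)+L^q(\O)}\le \|u\|_{L^p(\Lambda_u)}+\|u\|_{L^q(\Lambda_u^c)},
\]
i.e.\ the \emph{sum}, which is only $2\max\{\cdot,\cdot\}$; nothing in your sketch closes this factor of $2$, and appealing to ``the Luxemburg-norm representation'' does not help, since the Luxemburg norm is merely equivalent to, not equal to, the norm \eqref{eq:pq}. In fact, for the norm as defined in \eqref{eq:pq} the constant-$1$ max bound cannot hold: take $u=2\chi_E+\tfrac12\chi_F$ with $E,F$ disjoint of unit measure and $p=2$, $q=3$, so that the right-hand side of \ref{en:lupq} equals $2$; testing against $h=\chi_E+t\chi_F\in L^{p'}(\O)\cap L^{q'}(\O)$ with $t>0$ small gives
\[
\|u\|_{L^p(\O)+L^q(\O)}\ \ge\ \frac{2+\tfrac t2}{\max\left\{\|h\|_{p'},\|h\|_{q'}\right\}}\ =\ \frac{2+\tfrac t2}{\left(1+t^{3/2}\right)^{2/3}}\ >\ 2.
\]
The estimate \ref{en:lupq} with constant $1$ is the one proved in \cite{BPR} for \emph{their} norm $\inf\max\{\|v\|_p,\|w\|_q\}$ --- the ``slightly different definition'' alluded to before the proposition --- for which the canonical truncation does give the max immediately. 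So either prove \ref{en:lupq} for that equivalent norm, or keep the norm \eqref{eq:pq} and accept a multiplicative constant, which is all that is ever used later (e.g.\ in Proposition \ref{pr:MP} and Lemma \ref{le:norma>c}).
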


\begin{proof}
For the proof of properties \ref{en:op}-\ref{en:lupq} we refer to \cite[Section 2]{BPR}. Here we give only the proof of \ref{en:ooc}.\\
Let $ u \in L^p(\O)+L^q(\O)$. Obviously $ u|_B\in L^p(B)+L^q(B)$ and $ u|_{\O\setminus B} \in L^p(\O\setminus B)+L^q(\O\setminus B)$.
So, by \ref{en:min}, we can consider $ v_1\in L^p(B),  v_2\in L^p(\O\setminus B),  w_1\in L^q(B)$ and $ w_2\in L^q(\O\setminus B)$ such that
\[
\begin{array}{lll}
u = v_1 +w_1 \;\hbox{ on }B,&&\|u\|_{L^p(B)+L^q(B)}=\|v_1\|_{L^p(B)}+\|w_1\|_{L^q(B)},\\
u = v_2 +w_2 \;\hbox{ on }\O\setminus B, && \|u\|_{L^p(\O\setminus B)+L^q(\O\setminus B)}=\|v_2\|_{L^p(\O\setminus B)} +\|w_2\|_{L^q(\O\setminus B)}.
\end{array}
\]
Then, if
\[
v=
\left\{
\begin{array}{ll}
v_1&\hbox{ in } B\\
v_2&\hbox{ in } \O\setminus B
\end{array}
\right.
\hbox{ and }
w=
\left\{
\begin{array}{ll}
w_1&\hbox{ in } B\\
w_2&\hbox{ in } \O\setminus B
\end{array}
\right.
\]
we have that $v\in L^p(\O)$, $w\in L^q(\O)$, $u= v+w$ and
\begin{align*}
\|u\|_{L^p(\O)+L^q(\O)} \le & \|v\|_{L^p(\O)} + \|w\|_{L^q(\O)}\\
\le &\|v_1\|_{L^p(B)} + \|v_2\|_{L^p(\O\setminus B)} + \|w_1\|_{L^q(B)} + \|w_2\|_{L^q(\O\setminus B)}\\
= & \|u\|_{L^p(B)+L^q(B)}+\|u\|_{L^p(\O\setminus B)+L^q(\O\setminus B)}.
\end{align*}
\end{proof}

We can now define the Orlicz-Sobolev space where we will study our problem.

\begin{definition}
Let and $\a >1$. We denote with $\mathcal{W}$ the completion of $C_c^\infty (\RN,\R)$ in the norm
\[
\|u\|=\|u\|_\a+\|\n u\|_{p,q}.
\]
\end{definition}

Let us now study some properties of the space $\W$.

\begin{proposition}
$(\mathcal{W},\|\cdot\|)$ is a Banach space.
\end{proposition}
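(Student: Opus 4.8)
The plan is to show that $(\mathcal{W}, \|\cdot\|)$ is a Banach space by verifying that $\|\cdot\|$ is a genuine norm and that the space, being the completion of $C_c^\infty(\RN,\R)$ in this norm, is complete. Since $\mathcal{W}$ is defined as a completion, completeness is automatic by the standard abstract construction; the real content is checking that $\|u\| = \|u\|_\a + \|\n u\|_{p,q}$ is in fact a norm on $C_c^\infty(\RN,\R)$, i.e. that it is finite, positively homogeneous, satisfies the triangle inequality, and is positive definite. First I would observe that for $u \in C_c^\infty(\RN,\R)$ both summands are finite: $\|u\|_\a$ is finite because $u$ has compact support and is smooth, and $\|\n u\|_{p,q}$ is finite because $\n u$ is compactly supported and bounded, hence lies in $L^p(\RN) \cap L^q(\RN) \subset L^p(\RN) + L^q(\RN)$.

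Next I would establish the norm axioms. Homogeneity and the triangle inequality for $\|\cdot\|$ follow immediately from the corresponding properties of the two pieces: $\|\cdot\|_\a$ is the usual $L^\a$ norm (here $\a > 1$, so it is a genuine norm), and $\|\cdot\|_{p,q}$ is a norm on $L^p(\RN)+L^q(\RN)$ by its definition as an infimum of sums of norms (this is the standard fact that the sum-space norm in \eqref{eq:pq} is a norm, which underlies Proposition \ref{pr:lplq}). Summing two norms again yields a quantity that is homogeneous and subadditive. For positive definiteness, the key point is that $\|u\| = 0$ forces $\|u\|_\a = 0$, and since $\|\cdot\|_\a$ is a norm this already gives $u = 0$ almost everywhere; the gradient term only reinforces this. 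Thus $\|\cdot\|$ is a norm on $C_c^\infty(\RN,\R)$.

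Finally, once $\|\cdot\|$ is known to be a norm, the space $(\mathcal{W}, \|\cdot\|)$ is complete by construction: the completion of any normed space with respect to its norm is, by the abstract completion theorem, a Banach space into which the original space embeds isometrically as a dense subspace. Hence $(\mathcal{W}, \|\cdot\|)$ is a Banach space.

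The main obstacle, such as it is, lies not in completeness (which is free from the completion construction) but in cleanly justifying that the two constituent functionals are well-defined norms on the test-function space and that their sum inherits these properties. In particular one must be careful that $\|\cdot\|_{p,q}$ is a bona fide norm rather than merely a seminorm; this is guaranteed by the properties collected in Proposition \ref{pr:lplq}, notably that $L^p(\O)+L^q(\O)$ is a well-defined normed (indeed Orlicz, reflexive) space. Granting that, the argument is essentially a routine assembly of standard facts, and I expect the proof to be short.
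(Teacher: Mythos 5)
Your argument is only a proof of the literal statement if one reads ``completion'' in the purely abstract sense (equivalence classes of Cauchy sequences), in which case the proposition is a tautology and all that remains is to check that $\|\cdot\|$ is a norm on $C_c^\infty(\RN,\R)$ --- which you do correctly. But that reading is not what the paper establishes, and it would leave the rest of the paper without foundations. The substantive content of the proposition, and the entirety of the paper's actual proof, is the identification of the abstract limit as a concrete function: given a Cauchy sequence $\{u_n\}_n$, completeness of $L^\a(\RN)$ and of $L^p(\RN)+L^q(\RN)$ yields $u=\lim_n u_n$ in $L^\a(\RN)$ and $\mathbf{a}=\lim_n \n u_n$ in $L^p(\RN)+L^q(\RN)$, and the key step is to show that $\mathbf{a}$ is the distributional gradient of $u$. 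The paper does this by passing to the limit in $\int_{\RN} u_n \n\varphi = -\int_{\RN}\varphi\,\n u_n$, controlling the right-hand side via a minimal decomposition $\n u_n-\mathbf{a}=\mathbf{v}_n+\mathbf{w}_n$ and the pairings with $\varphi\in L^{p'}\cap L^{q'}$ (Proposition \ref{pr:lplq}, \ref{en:min} and \ref{en:dual}).

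This step is what you cannot get ``for free.'' It realizes $\W$ as a space of genuine functions $u\in L^\a(\RN)$ with $\n u\in L^p(\RN)+L^q(\RN)$, so that expressions such as $\irn\phi(|\n u|^2)$ and $\irn|u|^s$ make sense on $\W$; and it is exactly what is needed to know that the canonical continuous map from the abstract completion into $L^\a(\RN)$ is injective (if $u_n\to 0$ in $L^\a$, the distributional identification forces $\lim_n\n u_n=0$ in $L^p+L^q$, hence $\|u_n\|\to 0$). Without it one cannot exclude, a priori, ``phantom'' elements of the completion whose gradient component is unrelated to their function component --- a phenomenon that does occur for some Sobolev-type norms. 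So while your norm-axiom checks are fine and the abstract completion theorem is correctly invoked, the proposal omits the one nontrivial idea in the proof, and the one the paper relies on throughout (e.g., in Theorem \ref{th:w} and Proposition \ref{pr:PS}, where elements of $\W$ and their gradients are manipulated as functions).
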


\begin{proof}
Let $\{ u_n\}_n$ be a Cauchy sequence in $\mathcal{W}$. Then $\{ u_n\}_n$ is a Cauchy sequence in $L^\a(\RN)$ and $\{ \n u_n\}_n$ is a Cauchy sequence in $L^p(\RN)+ L^q(\RN)$. Since $L^\a(\RN)$ is complete, there exists $u \in L^\a(\RN)$ such that $\lim_n u_n = u$ in $L^\a(\RN)$. Since $L^p(\RN)+ L^q(\RN)$ is complete, then there exists ${\bf a} \in L^p(\RN)+ L^q(\RN)$ such that $\lim_n \n u_n = {\bf a}$ in $L^p(\RN)+ L^q(\RN)$.
We want to prove that $\n u = {\bf a}$ in the distributions sense, i.e. that for every $\varphi \in C_c^\infty (\RN)$
\[
\int_\RN u \n \varphi  = - \int_\RN \varphi {\bf a}.
\]
Obviously, for every $\varphi \in C_c^\infty (\RN)$ and for every $n\in\N$
\[
\int_\RN u_n \n \varphi = - \int_\RN \varphi \n u_n.
\]
So it is sufficient to prove that
\[
\lim_n \int_\RN u_n \n \varphi = \int_\RN u \n \varphi
\quad\hbox{ and }\quad
\lim_n \int_\RN \varphi \n u_n=\int_\RN \varphi {\bf a}.
\]
Since $\lim_n u_n = u$ in $L^\a(\RN)$, then
\[
\left|\int_\RN  ( u_n - u)\n\varphi\right| \leq \|\n\varphi\|_{\a'} \|u_n - u\|_\a \to 0.
\]
Moreover, for every $n\in\N$, from \ref{en:min} of Proposition \ref{pr:lplq}, we can consider $({\bf v}_n, {\bf w}_n)\in L^p(\RN)\times L^q(\RN) $ such that
\[
\n u_n - {\bf a} = {\bf v}_n + {\bf w}_n \hbox{ and } \| \n u_n - {\bf a} \|_{p,q} = \|{\bf v}_n\|_p + \|{\bf w}_n\|_q.
\]
Since $\lim_n \| \n u_n - {\bf a} \|_{p,q}=0$, then $\lim_n \|{\bf v}_n\|_p = \lim_n \|{\bf w}_n\|_q =0$. Thus, if $\varphi \in C_c^\infty (\RN)$
\begin{align*}
\left|\int_\RN \varphi (\n u_n - {\bf a})\right| & =  \left| \int_\RN \varphi {\bf v}_n + \int_\RN \varphi {\bf w}_n \right|\\
& \le  \|\varphi\|_{p'} \|{\bf v}_n\|_{p} + \|\varphi\|_{q'} \|{\bf w}_n\|_{q} \to 0.
\end{align*}
\end{proof}

\begin{proposition}\label{pr:rifle}
$(\mathcal{W},\|\cdot\|)$ is reflexive.
\end{proposition}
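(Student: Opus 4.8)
The plan is to realize $\W$ as a closed subspace of a reflexive product space and then invoke the fact that reflexivity passes to closed subspaces and to isometric images. Concretely, consider the product Banach space
\[
X := L^\alpha(\RN) \times \bigl(L^p(\RN)+L^q(\RN)\bigr),
\]
equipped with the norm $\|(u,\mathbf{a})\|_X = \|u\|_\alpha + \|\mathbf{a}\|_{p,q}$, together with the linear map $T\colon \W \to X$ defined on the dense subset $C_c^\infty(\RN,\R)$ by $Tu = (u,\n u)$ and extended by continuity to all of $\W$. By the very definition of the norm on $\W$ we have $\|Tu\|_X = \|u\|_\alpha + \|\n u\|_{p,q} = \|u\|$ for every $u$, so $T$ is a linear isometry onto its image $T(\W)\subset X$.

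First I would observe that $X$ is reflexive. Indeed, $L^\alpha(\RN)$ is reflexive because $1<\alpha<\infty$, and $L^p(\RN)+L^q(\RN)$ is reflexive by Proposition \ref{pr:lplq}\ref{en:dual}; since a finite product of reflexive Banach spaces is again reflexive, $X$ is reflexive.

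Next I would show that $T(\W)$ is closed in $X$. Because $\W$ is a Banach space by the previous proposition and $T$ is an isometry, $T(\W)$ is complete as a metric subspace of $X$, hence closed. A closed subspace of a reflexive Banach space is itself reflexive, so $T(\W)$ is reflexive; and since $\W$ is linearly isometric to $T(\W)$, the space $\W$ inherits reflexivity.

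The argument is essentially soft once the reflexivity of the sum space $L^p(\RN)+L^q(\RN)$ is available, which is precisely what we may borrow from Proposition \ref{pr:lplq}\ref{en:dual}. The only point deserving care is the assertion that $T$ maps onto a \emph{closed} subspace; I expect this to be the main (albeit mild) obstacle, and it is resolved entirely by the completeness of $\W$ combined with the isometry property, so no separate closedness computation inside $X$ is required.
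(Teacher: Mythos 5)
Your argument is correct, and it takes a genuinely different route from the paper. The paper does not pass through a product space at all: it replaces $\|\cdot\|_{p,q}$ by the equivalent norm $\| u\|^*_{p,q}=\inf\bigl\{( \| v\|^2_p+\| w\|^2_q)^{1/2} : u=v+w\bigr\}$, which is uniformly convex by \cite[Proposition 2.6]{BPR}, then combines it with the $L^\a$ norm in the $\ell^2$ fashion $\|\cdot \|^\# =\sqrt{\| \cdot \|_\a^2 + (\| \n \cdot \|^*_{p,q})^2}$ to get a uniformly convex norm equivalent to $\|\cdot\|$ on $\W$, and concludes via Milman--Pettis. Your proof instead embeds $\W$ isometrically as a closed subspace of the reflexive product $L^\a(\RN)\times(L^p(\RN)+L^q(\RN))$, using only the reflexivity of $L^p(\RN)+L^q(\RN)$ already recorded in Proposition \ref{pr:lplq}, item \ref{en:dual}, together with the completeness of $\W$ established in the preceding proposition. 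Your route is softer: it avoids importing the uniform convexity result from \cite{BPR} and sidesteps the (mildly delicate) point that one must combine the two uniformly convex norms in an $\ell^2$ way rather than simply adding them, since the sum of uniformly convex norms need not be uniformly convex. The paper's route yields slightly more, namely an explicit equivalent uniformly convex norm on $\W$, which is a stronger structural fact, though it is not exploited elsewhere in the paper. Both arguments are complete and correct; each identifies the same essential input, that $L^p+L^q$ is a ``nice'' (reflexive, respectively uniformly convexifiable) space.
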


\begin{proof}
As in \cite{BPR}, on $L^p(\RN)+L^q(\RN)$ we can also consider  the following norm
\begin{equation*}
\| u\|^*_{p,q}=\inf\left\{\left( \| v\|^2_p+\| w\|^2_q\right)^{1/2}\;\vline\; v\in L^p(\RN),w\in L^q(\RN),u= v+ w\right\}
\end{equation*}
and then, on $\mathcal{W}$, the norm
\[
\|u\|^*=\|u\|_\a+\|\n u\|^*_{p,q}.
\]
It can be easily shown that $\|\cdot\|$ and $\|\cdot\|^*$ are equivalent in $\mathcal{W}$.
Moreover, by \cite[Proposition 2.6]{BPR}, $\|\cdot\|^*_{p,q}$ is uniformly convex.
So, on $\mathcal{W}$ we can consider two uniformly convex norms: $\|\n\cdot\|^*_{p,q}$ and the $L^\a(\RN)$ norm. By a well known general result, also the norm
\[
\|\cdot \|^\# =\sqrt{\| \cdot \|_\a^2 + (\| \n \cdot \|^*_{p,q})^2}
\]
is uniformly convex and then $(\mathcal{W}, \|\cdot \|^\#)$ is reflexive. But, since the norm $\|\cdot \|^\#$ is equivalent to $\|\cdot \|$, then, also $(\mathcal{W}, \|\cdot \|)$ is reflexive.
\end{proof}

Adapting some classical arguments (see e.g. \cite{B}) we prove the following embedding result.

\begin{theorem}\label{th:immersione}
If $1<p<\min\{q,N\}$ and $1< p^*\frac{q'}{p'}$ then, for every $\a\in\left(1,p^*\frac{q'}{p'}\right]$,
the space $\mathcal{W}$ is continuously embedded into $L^{p^*}(\RN)$.
\end{theorem}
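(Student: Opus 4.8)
The plan is to prove the embedding inequality $\|u\|_{p^*}\le C\|u\|$ first for $u\in C_c^\infty(\RN)$ and then to pass to the whole space $\W$ by density, following the classical Gagliardo--Nirenberg--Sobolev scheme (see e.g. \cite{B}) while carrying the decomposition of the gradient into its $L^p$ and $L^q$ parts all the way through the Hölder step.

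First I would fix $u\in C_c^\infty(\RN)$ and use Proposition \ref{pr:lplq}\ref{en:min} to split the gradient as $\n u=\mathbf{v}+\mathbf{w}$ with $\mathbf{v}\in L^p(\RN)$, $\mathbf{w}\in L^q(\RN)$ and $\|\n u\|_{p,q}=\|\mathbf{v}\|_p+\|\mathbf{w}\|_q$. Setting $t=\frac{p(N-1)}{N-p}$, so that $tN/(N-1)=p^*$ and $t>1$ (the latter from $p>1$), I would apply the elementary inequality $\|w\|_{N/(N-1)}\le C\|\n w\|_1$ to $w=|u|^t\in C^1_c(\RN)$ to obtain
\[
\|u\|_{p^*}^{t}\le Ct\int_{\RN}|u|^{t-1}|\n u|\le Ct\int_{\RN}|u|^{t-1}\big(|\mathbf{v}|+|\mathbf{w}|\big).
\]

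The decisive feature of this choice of $t$ is that $(t-1)p'=p^*$ while $(t-1)q'=p^*q'/p'=:\beta$. Applying Hölder to the two summands separately therefore gives
\[
\|u\|_{p^*}^t\le Ct\Big(\|u\|_{p^*}^{p^*/p'}\|\mathbf{v}\|_p+\|u\|_\beta^{\beta/q'}\|\mathbf{w}\|_q\Big).
\]
Since $p<q$ forces $q'/p'<1$, we have $\beta<p^*$, while the hypothesis $\a\le p^*q'/p'$ means exactly $\a\le\beta$; hence $\beta$ lies in the interpolation window $[\a,p^*]$ and I may interpolate $\|u\|_\beta\le\|u\|_\a^{1-\theta}\|u\|_{p^*}^{\theta}$ with $\theta\in[0,1)$ defined by $1/\beta=(1-\theta)/\a+\theta/p^*$. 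Writing $X=\|u\|_{p^*}$, $a=\|\mathbf{v}\|_p$, $b=\|\mathbf{w}\|_q$, $c=\|u\|_\a$ and recalling $p^*/p'=\beta/q'=t-1$, this yields an inequality that is homogeneous of degree $t$ in $(X,a,b,c)$ and in which every power of $X$ on the right --- namely $t-1$ and $\theta(t-1)$ --- is strictly smaller than $t$.

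The only genuinely delicate point is then the absorption step: by repeated application of Young's inequality I would move the terms carrying positive powers of $X$ to the left-hand side and, using the degree-$t$ homogeneity, conclude the clean linear bound $\|u\|_{p^*}\le C(\|\mathbf{v}\|_p+\|\mathbf{w}\|_q+\|u\|_\a)=C\|u\|$. It is precisely here that the assumption $\a\le p^*q'/p'$ is indispensable: it is what keeps the nonstandard exponent $\beta$ produced by the $L^q$-part of the gradient inside the range controlled by the ambient norm, so that it does not destroy the estimate. Finally, applying the established inequality to the differences $u_n-u_m$ of an approximating sequence shows that every $\W$-Cauchy sequence is Cauchy in $L^{p^*}(\RN)$; since the $L^\a$- and $L^{p^*}$-limits agree almost everywhere, this identifies the limits and extends the continuous embedding to all of $\W$.
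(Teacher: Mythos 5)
Your proof is correct and follows essentially the same route as the paper's: the Sobolev inequality applied to $|u|^t$ with $t$ chosen so that $tN/(N-1)=p^*$, the resulting appearance of the exponent $\bar\a=p^*q'/p'$ from the $L^q$-part of the gradient, interpolation using $\a\le\bar\a<p^*$, and a density argument to pass from $C_c^\infty(\RN)$ to $\W$. The only (harmless) organizational differences are that you apply H\"older separately to the decomposition $\n u=\mathbf{v}+\mathbf{w}$ where the paper invokes the duality $(L^p(\RN)+L^q(\RN))'=L^{p'}(\RN)\cap L^{q'}(\RN)$, and that you carry out an explicit Young absorption to reach the clean linear bound $\|u\|_{p^*}\le C\|u\|$, whereas the paper retains the quasi-homogeneous inequality $\|u\|_{p^*}^t\le C\left(\|u\|_{p^*}^{t-1}+\|u\|_\a^{t-1}\right)\|\n u\|_{p,q}$ and feeds it directly into the Cauchy-sequence and continuity arguments.
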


\begin{proof}
Let $\varphi\in C_c^\infty (\RN)$ and $t\ge 1$. It can be proved that (see \cite[page 280]{B})
\[
\|\varphi\|_{tN/(N-1)}^t \le t \prod_{i=1}^{N} \left\| |\varphi|^{t-1} \frac{\partial \varphi}{\partial x_i}\right\|_1^{1/N}.
\]
By \ref{en:dual} of Proposition \ref{pr:lplq} we have
\begin{equation}
\label{eq:br2}
\|\varphi\|_{tN/(N-1)}^t \le t \left( \|\varphi\|_{p(t-1)/(p-1)}^{t-1} + \|\varphi\|_{q(t-1)/(q-1)}^{t-1} \right) \prod _{i=1}^{N} \left\|\frac{\partial \varphi}{\partial x_i}\right\|_{p,q}^{1/N}.
\end{equation}
If we take $t$ such that $\frac{tN}{N-1}=\frac{p}{p-1}(t-1)$, the inequality (\ref{eq:br2}) can be written as
\begin{equation}
\label{eq:brez}
\|\varphi\|_{p^*}^t \le t \left( \|\varphi\|_{p^*}^{t-1} + \|\varphi\|_{\bar\a}^{t-1} \right) \prod _{i=1}^{N} \left\|\frac{\partial \varphi}{\partial x_i}\right\|_{p,q}^{1/N}
\end{equation}
with $\bar\a=\frac{Nq(p-1)}{(q-1)(N-p)}=p^*\frac{q'}{p'}$.\\
We notice that, since $p<q$, then $\bar{\a}<p^*$.\\
Moreover, for every $i=1,\ldots,N$,
\begin{align*}
\inf\left\{ \| v_i\|_p+\| w_i\|_q \;\vline\; v_i\in L^p(\RN), w_i \in L^q(\RN),\frac{\partial \varphi}{\partial x_i}= v_i+ w_i\right\} \\
\le
\inf\left\{ \| {\bf v}\|_p+\| {\bf w}\|_q \;\vline\; {\bf v}\in L^p(\RN),{\bf w}\in L^q(\RN),\n \varphi ={\bf v}+{\bf w}\right\}.
\end{align*}
Then
\[
\prod _{i=1}^{N} \left\|\frac{\partial \varphi}{\partial x_i}\right\|_{p,q}^{1/N} \le \|\n\varphi\|_{p,q}.
\]
Since $1<\a \le \bar{\a}<p^*$, then, by interpolation and Young inequalities, we have
\[
\|\varphi\|_{\bar{\a}}^{t-1} \le  C( \| \varphi \|_\a^{t-1} +  \| \varphi \|_{p^*}^{t-1}).
\]
Thus, from (\ref{eq:brez}), we obtain that for every $\varphi \in C_c^\infty (\RN,\R)$
\begin{equation}
\label{eq:brez2}
\|\varphi\|_{p^*}^t \le C \left( \|\varphi\|_{p^*}^{t-1} + \|\varphi\|_\a^{t-1} \right) \|\n\varphi\|_{p,q}.
\end{equation}
We claim that the previous inequality holds for any $u\in\W.$ Indeed, for every $u\in \mathcal{W}$, we can consider a sequence $\{\varphi_n\}_n$ in $C_c^\infty (\RN)$ such that $\lim_n \varphi_n = u$ in $\mathcal{W}$.
Then
\begin{align}
&\lim_n \n \varphi_n = \n u \hbox{ in } L^p(\RN)+L^q(\RN), \nonumber\\
&\lim_n \varphi_n = u \hbox{ in } L^\a(\RN), \label{eq:convqo}\\
&\lim_n \varphi_n = u \hbox{ a.e. in } \RN. \nonumber
\end{align}
Applying \eqref{eq:brez2} to $\varphi=\varphi_n-\varphi_m$ for $n,m\ge 1,$ we deduce that $\{\varphi_n\}_n$ is a Cauchy sequence in $L^{p^*}(\RN)$ and, as a consequence, it converges in $L^{p^*}(\RN)$ to a function $v$. On the other hand, by \eqref{eq:convqo} and the uniqueness of the  limit a.e., we deduce that $v=u$ and then
    \begin{equation*}
        \lim_n \varphi_n=u\hbox{ in } L^{p^*}(\RN).
    \end{equation*}
So, applying \eqref{eq:brez2} to $\{\varphi_n\}_n$ and passing to the limit we deduce our claim.
\\
The continuous embedding $\W\hookrightarrow L^{p^*}(\RN)$ can be deduced reasoning as follows: if $\{u_n\}_n$ is a sequence in $\mathcal{W}$ that converges to $u$ in $\mathcal{W}$, we have that
\[
\| u_n - u \|_{p^*}^t \le C \left( \|u_n - u\|_{p^*}^{t-1} + \|u_n - u\|_\a^{t-1} \right) \|\n u_n - \n u\|_{p,q}
\]
and then $\lim_n u_n =u$ in $L^{p^*}(\RN)$.
\end{proof}

\begin{remark}
By interpolation we have that $\mathcal{W}$ is continuously embedded into $L^\tau(\RN)$ for any $\tau\in [\a,p^*]$.
\end{remark}

Requiring somethings more with the respect to the assumptions of Theorem \ref{th:immersione}, we could have a more precise description of the space $\W$.
\begin{theorem}\label{th:w}
If $1<p<\min\{q,N\}$, $1< p^*\frac{q'}{p'}$ and $q<p^*$, then, for every $\a\in\left(1,p^*\frac{q'}{p'}\right]$, we have that
\[
\W=\{u\in L^\a(\RN)\cap L^{p^*}(\RN)\mid \n u\in L^p(\RN)+L^q(\RN)\}.
\]
\end{theorem}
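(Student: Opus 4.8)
The plan is to prove the set equality by establishing the two inclusions separately. Write $X=\{u\in L^\a(\RN)\cap L^{p^*}(\RN)\mid \n u\in L^p(\RN)+L^q(\RN)\}$ for the right-hand side. The inclusion $\W\subseteq X$ is immediate: by construction every $u\in\W$ is the limit of a sequence $\{\varphi_n\}_n\subseteq C_c^\infty(\RN)$ that is Cauchy for $\|\cdot\|$, so, arguing exactly as in the proof that $\W$ is a Banach space, one has $u\in L^\a(\RN)$ with $\n u\in L^p(\RN)+L^q(\RN)$, while Theorem \ref{th:immersione} gives in addition $u\in L^{p^*}(\RN)$. Hence $u\in X$.

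The substance is the reverse inclusion $X\subseteq\W$, which I would obtain by the classical two-step scheme: first reduce to compactly supported functions by a cut-off, then mollify. Fix $u\in X$ and a cut-off $\zeta\in C_c^\infty(\RN)$ with $\zeta\equiv 1$ on $B_1$, $\supp\zeta\subseteq B_2$ and $0\le\zeta\le 1$; set $\zeta_R(x)=\zeta(x/R)$ and $u_R=\zeta_R u$. Each $u_R$ has compact support and lies in $L^\a(\RN)$, and I claim $u_R\to u$ in $\W$. Convergence $u_R\to u$ in $L^\a(\RN)$ follows from dominated convergence. For the gradients, write $\n u_R=\zeta_R\n u+u\,\n\zeta_R$; splitting $\n u=\mathbf v+\mathbf w$ with $\mathbf v\in L^p(\RN)$ and $\mathbf w\in L^q(\RN)$ (possible by \ref{en:min} of Proposition \ref{pr:lplq}), dominated convergence gives $\zeta_R\mathbf v\to\mathbf v$ in $L^p(\RN)$ and $\zeta_R\mathbf w\to\mathbf w$ in $L^q(\RN)$, so $\zeta_R\n u\to\n u$ in $L^p(\RN)+L^q(\RN)$.

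The one delicate term is the commutator $u\,\n\zeta_R$, which is supported on the annulus $A_R=B_{2R}\setminus B_R$ and satisfies $|\n\zeta_R|\le C/R$. Here the hypotheses $u\in L^{p^*}(\RN)$ and $q<p^*$ are exactly what is needed: since $u\,\n\zeta_R\in L^q(\RN)$ one has $\|u\,\n\zeta_R\|_{p,q}\le\|u\,\n\zeta_R\|_q$, and on the finite-measure set $A_R$ the embedding $L^{p^*}(A_R)\hookrightarrow L^q(A_R)$ (valid precisely because $q<p^*$) together with Hölder's inequality yields
\[
\|u\,\n\zeta_R\|_{p,q}\le\|u\,\n\zeta_R\|_q\le\frac{C}{R}\,|A_R|^{\frac{1}{q}-\frac{1}{p^*}}\,\|u\|_{L^{p^*}(A_R)}\le C\,R^{-N\left(\frac{1}{p}-\frac{1}{q}\right)}\,\|u\|_{L^{p^*}(A_R)},
\]
where I used $|A_R|\le CR^N$ and the identity $N\bigl(\tfrac{1}{q}-\tfrac{1}{p^*}\bigr)=1-N\bigl(\tfrac{1}{p}-\tfrac{1}{q}\bigr)$. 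As $R\to\infty$ the prefactor decays (since $p<q$) and $\|u\|_{L^{p^*}(A_R)}\to0$ as a tail of a convergent integral, so $u\,\n\zeta_R\to 0$ in $L^p(\RN)+L^q(\RN)$ and therefore $u_R\to u$ in $\W$.

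It then remains to check that each compactly supported $u_R$ already lies in $\W$, i.e. is approximable in $\|\cdot\|$ by $C_c^\infty$ functions. By \ref{en:op} of Proposition \ref{pr:lplq}, on the bounded set $B_{2R}$ one has $\n u\in L^p(B_{2R})$, and the commutator bound above shows $u\,\n\zeta_R\in L^p(\RN)$ as well, so $\n u_R\in L^p(\RN)$; moreover $u_R\in L^{p^*}(\RN)$ with compact support gives $u_R\in L^p(\RN)$, whence $u_R\in W^{1,p}(\RN)$ with compact support, together with $u_R\in L^\a(\RN)$. Friedrichs mollification $u_R\ast\rho_\eps\in C_c^\infty(\RN)$ then converges to $u_R$ in $L^\a(\RN)$ and in $W^{1,p}(\RN)$, and since $\|\cdot\|_{p,q}\le\|\cdot\|_p$ the gradients converge in $L^p(\RN)+L^q(\RN)$ too; hence $u_R\ast\rho_\eps\to u_R$ in $\W$. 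Chaining the two approximations gives $u\in\W$. I expect the commutator estimate to be the main obstacle: one must verify that the spatial decay encoded in $u\in L^{p^*}(\RN)$ beats the $O(1/R)$ size of $\n\zeta_R$ against the growing measure of $A_R$, and it is here that the extra hypothesis $q<p^*$ enters, through the finite-measure embedding $L^{p^*}(A_R)\hookrightarrow L^q(A_R)$.
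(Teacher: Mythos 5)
Your proof is correct and follows essentially the same route as the paper's: truncation by a cut-off, with the key commutator estimate $\|u\,\n\zeta_R\|_q\le \frac{C}{R}|A_R|^{\frac1q-\frac{1}{p^*}}\|u\|_{L^{p^*}(A_R)}$ on the annulus (where $q<p^*$ and $u\in L^{p^*}(\RN)$ enter exactly as in the paper), followed by mollification. The only differences are organizational and equally valid: you prove $\zeta_R\n u\to\n u$ by dominated convergence applied to an arbitrary exact decomposition $\n u=\mathbf{v}+\mathbf{w}$ and you mollify via classical $W^{1,p}$ theory after observing that each compactly supported truncation lies in $W^{1,p}(\RN)$, whereas the paper splits over $\Lambda_{\n u}$ and $\Lambda_{\n u}^c$ using \ref{en:ooc} and mollifies a general $u$ through its $L^p+L^q$ decomposition.
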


\begin{proof}
Defining
\[
\widetilde\W=\{u\in L^\a(\RN)\cap L^{p^*}(\RN)\mid \n u\in L^p(\RN)+L^q(\RN)\},
\]
we have to show that $\W=\widetilde\W$. By definition of $\W$ and by Theorem \ref{th:immersione}, we have that $\W\subset \widetilde\W$. Now, let $u\in \widetilde \W$, we have to prove that it can be approximated in the $\W$-norm by smooth functions with compact support. We will follow some ideas of \cite{LL}.
\\
\
\\
As a first step, we prove that $u$ can be approximated in the $\W$-norm by compact support functions. Let $k:\RN\to[0,1]$ be a test function such that $k\equiv 1$ in $|x|\le 1$ and $k\equiv 0$ in $|x|\ge 2.$
For any $M>0$, define $v_M= k_M u $, where $k_M(x)=k(\frac{x}{M}),$ and set $A_M=\{x\in \RN\mid M\le |x|\le 2M\}$.
Certainly $v_M$ has a compact support and it is in $L^\a(\RN)\cap L^{p^*}(\RN)$. Moreover, since $\n v_M= k_M\n u+u\n k_M$, we have that $\n v_M\in L^p(\RN)+L^q(\RN)$ if both the terms of the sum are in $L^p(\RN)+L^q(\RN).$ Since $k_M\in L^{\infty}(\RN)$ and $\n u\in L^p(\RN)+L^q(\RN)$, of course $k_M\n u \in L^p(\RN)+L^q(\RN)$. Since $\n k_M$ vanishes in $A_M^c,$ $|A_M|<+\infty$,  $\n k_M\in L^\infty(A_M)$ and $u\in L^{p^*}(\RN)$, we deduce that also $u\n k_M \in L^p(\RN)+L^q(\RN).$
We have easily that
\begin{equation*}
\|u-v_M\|^\a_\a\le\int_{B_M^c}|u(x)|^\a\,dx=o_M(1),
\end{equation*}
where $o_M(1)$ denotes vanishing functions as $M\to+\infty.$
Then we have to show that
\begin{equation*}
\|\n u-\n v_M\|_{p,q}=o_M(1),
\end{equation*}
too.
Using \ref{en:ulu} of Proposition \ref{pr:lplq},  we deduce that $\n u\in L^p(\Lambda_{\n u})\cap L^q(\Lambda_{\n u}^c)$.
\\
Let us observe that
\begin{equation}\label{eq:nfbc}
\|\n u\|_{ L^p(\Lambda_{\n u}\cap B^c_M)}+\|\n u\|_{ L^q(\Lambda_{\n u}^c\cap B^c_M)}=o_M(1),
\end{equation}
and
\begin{equation}\label{eq:fam}
\|u\|_{ L^{p^*}(A_M)}=o_M(1).
\end{equation}
Since $u\in L^{p^*}(\RN)$, by H\"older inequality, we get that $\n v_M\in L^p(\Lambda_{\n u}\cap B_M^c)\cap L^q(\Lambda_{\n u}^c \cap B_M^c)$ and by (\ref{eq:nfbc}) and \ref{en:ooc} of Proposition \ref{pr:lplq}, we have
\begin{align*}
\|\n u-\n v_M\|_{p,q}
&\le \|\n u-\n v_M\|_{L^p(B_M)+L^q(B_M)}
+ \|\n u-\n v_M\|_{L^p(B_M^c)+L^q(B_M^c)}
\\
&\le \|\n u-\n v_M\|_{L^p(\Lambda_{\n u}\cap B_M^c)+L^q(\Lambda_{\n u}\cap B_M^c)}
\\
&\quad +\|\n u-\n v_M\|_{L^p(\Lambda_{\n u}^c\cap B_M^c)+L^q(\Lambda_{\n u}^c\cap B_M^c)}
\\
&\le \|\n u-\n v_M\|_{L^p(\Lambda_{\n u}\cap B_M^c)}
+\|\n u-\n v_M\|_{L^q(\Lambda_{\n u}^c\cap B_M^c)}
\\
&\le \|\n v_M\|_{L^p(\Lambda_{\n u}\cap B_M^c)}
+\|\n v_M\|_{L^q(\Lambda_{\n u}^c\cap B_M^c)}
+o_M(1)
\\
& \le \|u \n k_M\|_{L^p(A_M)}
+\|u\n k_M\|_{L^q(A_M)}
+o_M(1).
\end{align*}
Since
\[
\|u\n k_M\|_{L^p(A_M)}
\le \frac{C}{M}\|u\|_{L^{p^*}(A_M)} |A_M|^{\frac 1N},
\]
and $|A_M|=O(M^N),$ as $M\to+\infty$, by (\ref{eq:fam}), we have that
\[
\|u\n k_M\|_{L^p(A_M)}=o_M(1).
\]
Analogously, if $p<q<p^*$,
\[
\|u\n k_M\|_{L^q(A_M)}
\le \frac{C}{M}\|u\|_{L^{p^*}(A_M)} |A_M|^{\frac 1q-\frac 1{p^*}}
=o_M(1).
\]
Therefore, we can conclude that $v_M \to u$ in the $\W$-norm, as $M\to +\infty$.
\\
\
\\
As a second step, let us show that $u\in \widetilde \W$ can be approximated by smooth functions.
\\
Let $j:\RN\to\R_+$ be in $C^{\infty}_c(\RN)$ a function inducing a probability measure,
$j_\eps(x)=\eps^{-N}j(\frac{x}{\eps})$ and $u_\eps=u* j_\eps\in C^{\infty}(\RN)$ the convolution product of $u$ with $j_\eps$. Since
$\{j_\eps\}_\eps$ are approximations to the identity,
certainly $u_\eps\to u$ in $L^\a(\RN)$, as $\eps \to 0$.
Moreover if we write $\n u={\bf a}+{\bf b}$, with ${\bf a}\in L^p(\RN)$ and ${\bf b}\in L^q(\RN)$, we have $\n u_\eps=\n u*j_\eps={\bf a}*j_\eps+{\bf b}*j_\eps$, with of course ${\bf a}*j_\eps\in L^p(\RN)$ and ${\bf b}*j_\eps\in L^q(\RN)$. Therefore
\[
\|\n u_\eps -\n u\|_{p,q}
\le \|{\bf a}*j_\eps-{\bf a}\|_{p}
+\|{\bf b}*j_\eps-{\bf b}\|_{q}\to 0.
\]
Hence we can conclude that  $u_\eps \to u$ in the $\W$-norm, as $\eps\to 0$.
\\
\
\\
The conclusion of the proof follows immediately observing that $\{v_M*j_\eps\}_{M,\eps}$ are in
$C^{\infty}_c(\RN)$ and approximate $u$ in the $\W$-norm.
\end{proof}

In order to prove some compactness results, we consider radially symmetric functions of $\W$.
\begin{definition}
Let us denote with
\[
(C_c^\infty (\RN,\R))_{{\rm rad}}=\{u\in  C_c^\infty (\RN,\R)\mid u \hbox{ is radially symmetric}\},
\]
and let $\mathcal{W}_r$ be the completion of $(C_c^\infty (\RN,\R))_{{\rm rad}}$ in the norm $\|\cdot\|$, namely
\[
 \W_r=\overline{(C_c^\infty (\RN,\R))_{{\rm rad}}}^{\|\cdot\|}.
\]
\end{definition}

\begin{remark}\label{re:rad}
In general it is not clear to see if $\W_r$ coincides with the set of radial functions of $\W$. While, if $1<p<\min\{q,N\}$, $1<\a< p^*\frac{q'}{p'}$ and $q<p^*$, then, arguing as in the proof of Theorem \ref{th:w}, we can prove that the two sets are equal.
\end{remark}

The following compact embedding result holds.

\begin{theorem}
\label{th:compact}
If $1<p<q<N$ and $1< p^*\frac{q'}{p'}$ then, for every $\a\in\left(1,p^*\frac{q'}{p'}\right]$,
$\mathcal{W}_r$ is compactly embedded into $L^\tau(\RN)$ with $\a<\tau<p^*$.
\end{theorem}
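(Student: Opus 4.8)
The plan is to adapt the classical Strauss argument to the $L^p+L^q$ setting by decoupling the estimate into a uniform tail bound, coming from a radial decay lemma, and an interior compactness statement, coming from Rellich--Kondrachov.

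First I would establish a Strauss-type pointwise decay estimate: there exist $C>0$ and $\gamma=\min\{(N-p)/p,(N-q)/q\}=(N-q)/q>0$ such that every $u\in\W_r$ satisfies
\[
|u(x)|\le C\,\|u\|\,|x|^{-\gamma},\qquad |x|\ge 1.
\]
It suffices to argue for radial $\varphi\in(C_c^\infty(\RN))_{\rm rad}$ and then pass to the limit using the continuous embedding $\W\hookrightarrow L^{p^*}(\RN)$ of Theorem \ref{th:immersione} together with a.e.\ convergence. For such $\varphi$, writing $r=|x|$ and using the canonical radial decomposition $\n\varphi=\n\varphi\,\mathbf 1_{\Lambda_{\n\varphi}}+\n\varphi\,\mathbf 1_{\Lambda_{\n\varphi}^c}$ provided by \ref{en:ulu} of Proposition \ref{pr:lplq}, I would start from $|\varphi(r)|\le\int_r^\infty|\varphi'(s)|\,ds$ and split the integral over $\Lambda_{\n\varphi}$ (where I estimate with $L^p$) and over $\Lambda_{\n\varphi}^c$ (where I estimate with $L^q$). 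On each piece Hölder's inequality against the weight $s^{N-1}$ produces a factor controlled by $\|\n\varphi\|_{L^p(\Lambda_{\n\varphi})}$ (resp.\ $\|\n\varphi\|_{L^q(\Lambda_{\n\varphi}^c)}$) times $\big(\int_r^\infty s^{-(N-1)/(p-1)}\,ds\big)^{1/p'}$ (resp.\ with $q$); the latter tail integrals converge precisely because $p<q<N$, and they yield the decay rates $r^{-(N-p)/p}$ and $r^{-(N-q)/q}$. Since by the properties of $L^p+L^q$ (Proposition \ref{pr:lplq} and \cite{BPR}) the quantities $\|\n\varphi\|_{L^p(\Lambda_{\n\varphi})}$ and $\|\n\varphi\|_{L^q(\Lambda_{\n\varphi}^c)}$ are dominated by $\|\n\varphi\|_{p,q}\le\|\varphi\|$, the claimed estimate follows.

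Next I would treat the two regions separately for a bounded sequence $\{u_n\}\subset\W_r$, which, $\W_r$ being a closed subspace of the reflexive space $\W$ (Proposition \ref{pr:rifle}), I may assume satisfies $u_n\weakto u$ in $\W_r$. For the interior I observe that on any ball $B_R$ one has $L^p(B_R)+L^q(B_R)=L^p(B_R)$ with equivalent norms, since $|B_R|<+\infty$ and $p<q$; hence the restriction map $\W_r\to W^{1,p}(B_R)$ is bounded and, for every $R$, the compactness of $W^{1,p}(B_R)\hookrightarrow L^\tau(B_R)$ (valid for $\tau<p^*$) turns the weak convergence into $u_n\to u$ strongly in $L^\tau(B_R)$. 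For the exterior I use the decay lemma and interpolation: for $\a<\tau<p^*$,
\[
\iRc|u_n|^\tau\le\Big(\sup_{|x|\ge R}|u_n|\Big)^{\tau-\a}\iRc|u_n|^\a\le C\,R^{-\gamma(\tau-\a)},
\]
which tends to $0$ as $R\to+\infty$ uniformly in $n$ because $\tau>\a$; the same bound holds for $u$.

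Finally I would combine the two: given $\eps>0$, choose $R$ so that the exterior integrals of $|u_n|^\tau$ and $|u|^\tau$ are below $\eps$ uniformly in $n$, and then use the interior strong convergence on $B_R$ to make $\int_{B_R}|u_n-u|^\tau$ small; this gives $u_n\to u$ in $L^\tau(\RN)$ and hence compactness. The main obstacle is the decay lemma: the $L^p+L^q$ structure forces one to split the radial integral according to the two growth regimes of $\phi$, to check that both tail weights are integrable (this is exactly where $q<N$ enters) and that the corresponding gradient norms are controlled by $\|\cdot\|$. Once the decay is in hand, the interpolation in the tails and the Rellich--Kondrachov step in the interior are routine.
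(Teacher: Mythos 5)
Your proposal is correct, and its first half (the radial decay estimate) is exactly the paper's key ingredient, Lemma \ref{le:radiallemma}; but the way you conclude compactness is genuinely different from the paper's. The paper, after establishing the decay lemma, obtains $u_n\to u$ a.e.\ by bounding $\{u_n\}$ locally in $W^{1,\sigma}(K)$ with $\sigma=\min\{\a,p\}$ on small sets $K$, and then invokes the Berestycki--Lions compactness lemma (Theorem \ref{th:BL}) with $P(t)=|t|^\tau$, $Q(t)=|t|^\a+|t|^{p^*}$, $v_n=u_n-u$. You instead unpack that lemma: Rellich--Kondrachov on balls for the interior, and the decay estimate plus the interpolation $\int_{|x|>R}|u_n|^\tau\le(\sup_{|x|\ge R}|u_n|)^{\tau-\a}\|u_n\|_\a^\a$ for the tails. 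Both work; the paper's route only needs a.e.\ convergence locally (hence the weaker exponent $\sigma$ suffices), while your route needs the full local $W^{1,p}(B_R)$ bound to reach every $\tau<p^*$ via Rellich--Kondrachov --- this does hold, since $\|u_n\|_{L^p(B_R)}\le |B_R|^{1/p-1/p^*}\|u_n\|_{p^*}$ by Theorem \ref{th:immersione} and $\|\n u_n\|_{L^p(B_R)}\le C(R)\|\n u_n\|_{p,q}$ by the argument already in Proposition \ref{pr:lplq}\ref{en:op}, but you should say so explicitly. What your version buys is self-containedness (no appeal to \cite{BL}) and an explicit quantitative tail bound; what the paper's version buys is brevity and weaker local integrability requirements.

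One point to tighten in your decay lemma: you estimate via the decomposition $\n\varphi=\n\varphi\,\mathbf 1_{\Lambda_{\n\varphi}}+\n\varphi\,\mathbf 1_{\Lambda_{\n\varphi}^c}$ and then assert that $\|\n\varphi\|_{L^p(\Lambda_{\n\varphi})}$ and $\|\n\varphi\|_{L^q(\Lambda_{\n\varphi}^c)}$ are dominated by $\|\n\varphi\|_{p,q}$. That inequality is not among the properties listed in Proposition \ref{pr:lplq} (only the reverse, \ref{en:lupq}, is), and in \cite{BPR} it holds with constants depending on $|\Lambda_{\n\varphi}|$, hence on $\|\n\varphi\|_{p,q}$ itself; so as stated your constant $C$ is uniform only on bounded sets of $\W_r$. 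That is enough for the application to a bounded sequence, but the cleaner route --- and the one the paper takes --- is to estimate against an \emph{arbitrary} decomposition $\n\varphi={\bf v}+{\bf w}$ with ${\bf v}\in L^p(\RN)$, ${\bf w}\in L^q(\RN)$, obtaining $|\varphi(r)|\le C r^{-(N-q)/q}(\|{\bf v}\|_p+\|{\bf w}\|_q)$, and then pass to the infimum over all decompositions, which yields a genuinely linear bound in $\|\n\varphi\|_{p,q}$ with a universal constant.
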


To show this result we apply \cite[Theorem A.I]{BL}, that we recall here.
\begin{theorem}
\label{th:BL}
Let $P$ and $Q:\R\to\R$ be two continuous functions satisfying
\begin{equation*}
\lim_{|s|\to +\infty}\frac{P(s)}{Q(s)}=0,
\end{equation*}
$\{v_n\}_n$ be a sequence of measurable functions from $\RN$ to $\R$ such that
\begin{align*}
&\sup_n\irn | Q(v_n)| <+\infty,
\\ 
&P(v_n(x))\to v(x) \ \hbox{ a.e. in }\RN. 
\end{align*}
Then $\|P(v_n)-v\|_{L^1(B)}\to 0$, for any bounded Borel set
$B$.

Moreover, if we have also
\begin{align*}
\lim_{s\to 0}\frac{P(s)}{Q(s)} &=0,\\ 
\lim_{|x|\to+\infty}\sup_n |v_n(x)| &= 0, 
\end{align*}
then $\|P(v_n)-v\|_{1}\to 0.$
\end{theorem}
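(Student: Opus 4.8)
The plan is to prove the two assertions separately, with the same mechanism underlying both: the growth comparison between $P$ and $Q$ converts the single uniform bound $\sup_n \irn |Q(v_n)| =: C < +\infty$ into equi-integrability of $\{P(v_n)\}_n$ (near large values, and, in the second part, also on the spatial tail). The first assertion is then an application of Vitali's convergence theorem on the finite-measure set $B$, and the second follows by splitting $\RN$ into a large ball and its complement.

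For the local statement, I would first establish uniform equi-integrability. Fix $\eps>0$. Since $\lim_{|s|\to+\infty} P(s)/Q(s)=0$, there is $M>0$ with $|P(s)|\le \eps\,|Q(s)|$ whenever $|s|>M$, and by continuity $K_M:=\max_{|s|\le M}|P(s)|<+\infty$. For any measurable $E$, splitting according to whether $|v_n|\le M$ or $|v_n|>M$ yields
\[
\int_E |P(v_n)| \le K_M\,|E| + \eps \irn |Q(v_n)| \le K_M\,|E| + \eps\,C,
\]
uniformly in $n$, so choosing $|E|$ small makes the right-hand side small. On a bounded Borel set $B$ one has $|B|<+\infty$; the hypothesis $P(v_n)\to v$ a.e.\ gives convergence in measure on $B$, and Fatou's lemma gives $v\in L^1(B)$. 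Vitali's theorem then delivers $\|P(v_n)-v\|_{L^1(B)}\to 0$.

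For the global statement, the extra hypotheses control the spatial tail. Given $\eps>0$, since $\lim_{s\to 0} P(s)/Q(s)=0$ there is $\delta>0$ with $|P(s)|\le\eps\,|Q(s)|$ for $|s|<\delta$, and since $\sup_n|v_n(x)|\to 0$ as $|x|\to+\infty$ there is $R>0$ with $\sup_n|v_n(x)|<\delta$ for $|x|>R$. Hence $\iRc |P(v_n)|\le \eps\,C$ uniformly in $n$, and by Fatou the same bound holds for $\iRc |v|$. Writing $\|P(v_n)-v\|_1$ as the integral over the ball $B_R$ plus the integral over its complement, the former tends to $0$ by the local statement already proved while the latter is at most $2\eps C$; letting $\eps\to 0$ closes the argument.

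The main obstacle is the equi-integrability step: one must verify that the two growth conditions interface correctly with the single $L^1$-bound on $Q(v_n)$, keeping every estimate uniform in $n$, and remain mindful that $v$ is only the a.e.\ limit (it need not have the form $P(w)$), so its integrability has to be extracted a posteriori through Fatou rather than assumed at the outset.
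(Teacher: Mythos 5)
Your proof is correct. A preliminary remark: the paper itself gives \emph{no} proof of this statement --- it is recalled verbatim as Theorem A.I of Berestycki--Lions \cite{BL} (the classical Strauss compactness lemma) and used as a black box in the proof of Theorem \ref{th:compact}, so there is no internal argument to compare yours against. Your argument is, in substance, the standard proof of that result, and every step holds up. The growth condition at infinity converts the single uniform bound $\sup_n \irn |Q(v_n)|\le C$ into equi-integrability of $\{P(v_n)\}_n$ through the splitting of $E$ into $E\cap\{|v_n|\le M\}$ and $E\cap\{|v_n|>M\}$, giving $\int_E |P(v_n)|\le K_M|E|+\eps C$ uniformly in $n$; combined with convergence in measure on the finite-measure set $B$ (automatic from a.e.\ convergence) and the a posteriori integrability of $v$ from Fatou, Vitali's theorem yields the local conclusion. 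For the global statement, the condition at the origin together with the uniform spatial decay gives the tail bound $\int_{|x|>R}|P(v_n)|\le \eps C$ uniformly in $n$, Fatou transfers the same bound to $v$, and the ball/complement decomposition closes the argument with $\limsup_n\|P(v_n)-v\|_1\le 2\eps C$ for arbitrary $\eps$. The two points you flag as delicate --- uniformity in $n$ of all constants ($M$, $K_M$, $\delta$, $R$) and the fact that $v$ need not be of the form $P(w)$, so its integrability must be extracted by Fatou --- are exactly the right things to worry about, and you handle both correctly. The only implicit assumption, shared with the original statement, is that $Q$ does not vanish where the ratios $P/Q$ are invoked, which is how the hypotheses of \cite{BL} are meant to be read.
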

In order to use the previous result, we need a uniform decaying estimate on the functions of our space. The radial symmetry of the functions allows us to prove the following lemma which is the analogous of the well known result due to Strauss (see \cite{BL} or \cite{Str}).
\begin{lemma}
\label{le:radiallemma}
If $1<p<q<N$, there exists $C>0$ such that for every $u\in \mathcal{W}_r$
\begin{equation}
\label{eq:unifdec}
| u(x) | \le \frac{C}{|x|^\frac{N-q}{q}}\| \n u \|_{p,q}, \quad\hbox{for }|x|\ge 1.
\end{equation}
\end{lemma}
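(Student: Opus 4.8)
The plan is to prove the inequality first for radial test functions $\varphi\in (C_c^\infty(\RN,\R))_{\rm rad}$ and then to extend it to all of $\W_r$ by density. For such a $\varphi$, writing $\varphi(x)=\varphi(|x|)=\varphi(r)$ and using that $\varphi$ vanishes for large $|x|$, I would start from the one–dimensional identity $\varphi(r)=-\int_r^{+\infty}\varphi'(s)\,ds$, so that $|\varphi(r)|\le \int_r^{+\infty}|\varphi'(s)|\,ds$, where $|\varphi'(s)|$ is the radial profile of $|\n\varphi|$. The key is to split $|\n\varphi|$ according to the sum structure of $L^p+L^q$. Regarding $|\n\varphi|$ as a nonnegative radial scalar function, one checks that $\||\n\varphi|\|_{p,q}=\|\n\varphi\|_{p,q}$ (given a scalar decomposition $|\n\varphi|=a+b$ one recovers a vector one by multiplying by $\n\varphi/|\n\varphi|$, and conversely), and by \ref{en:min} of Proposition \ref{pr:lplq} the infimum defining this norm is attained, say $|\n\varphi|=a+b$ with $a\in L^p(\RN)$, $b\in L^q(\RN)$ and $\|a\|_p+\|b\|_q=\|\n\varphi\|_{p,q}$. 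Replacing $a$ and $b$ by their spherical averages preserves the identity $a+b=|\n\varphi|$ (which is radial) by linearity of averaging, while not increasing $\|a\|_p+\|b\|_q$ by Jensen's inequality; hence I may assume $a,b$ radial.

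The core estimate is then a weighted H\"older inequality on each piece. For the $L^p$ part,
\[
\int_r^{+\infty}|a(s)|\,ds\le\Big(\int_r^{+\infty}|a(s)|^p s^{N-1}\,ds\Big)^{1/p}\Big(\int_r^{+\infty}s^{-\frac{N-1}{p-1}}\,ds\Big)^{1/p'},
\]
where the second integral converges precisely because $p<N$ and equals a constant times $r^{-(N-p)/p}$; since $\int_0^{+\infty}|a|^p s^{N-1}\,ds$ is a multiple of $\|a\|_p^p$, this yields $\int_r^{+\infty}|a|\,ds\le C\,r^{-(N-p)/p}\|a\|_p$. The identical computation with $q$ in place of $p$ (convergent since $q<N$) gives $\int_r^{+\infty}|b|\,ds\le C\,r^{-(N-q)/q}\|b\|_q$. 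Because $p<q$ forces $\frac{N-p}{p}>\frac{N-q}{q}$, for $r\ge 1$ one has $r^{-(N-p)/p}\le r^{-(N-q)/q}$, so, recalling $|\varphi'|=a+b$, the two contributions combine into
\[
|\varphi(r)|\le C\,r^{-\frac{N-q}{q}}\big(\|a\|_p+\|b\|_q\big)=C\,r^{-\frac{N-q}{q}}\|\n\varphi\|_{p,q},
\]
which is exactly \eqref{eq:unifdec} for $\varphi$.

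Finally, to pass to an arbitrary $u\in\W_r$, I would choose $\varphi_n\in(C_c^\infty(\RN,\R))_{\rm rad}$ with $\varphi_n\to u$ in $\W$. Applying the estimate just proved to the differences $\varphi_n-\varphi_m$ shows that $\{\varphi_n\}_n$ is uniformly Cauchy on $\{|x|\ge 1\}$, hence converges there uniformly to a continuous function which, since $\varphi_n\to u$ a.e., coincides with $u$; passing to the limit in the inequality for $\varphi_n$ then gives \eqref{eq:unifdec}. I expect the only genuinely delicate point to be the reduction to a radial minimizing decomposition of $|\n\varphi|$, together with the verification that $\|\cdot\|_{p,q}$ is unchanged when passing from the vector field $\n\varphi$ to the scalar $|\n\varphi|$; once that bookkeeping is in place, the weighted H\"older estimates and the density argument are routine, and the restriction $|x|\ge 1$ and the exponent $(N-q)/q$ are seen to be forced by having to dominate the faster-decaying $L^p$-contribution by the slower $L^q$-one.
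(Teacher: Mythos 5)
Your proof is correct and follows essentially the same route as the paper's: reduce to radial test functions, write $|u(r)|\le\int_r^{+\infty}|u'(\rho)|\,d\rho$, split the gradient according to the $L^p+L^q$ structure, apply H\"older against the weight $|x|^{-(N-1)}$ on $B_r^c$ to get the decay rates $(N-p)/p$ and $(N-q)/q$, keep the slower one for $r\ge 1$, pass to the infimum over decompositions, and conclude by density. The only divergence is that the paper sidesteps the point you flag as delicate: it takes an arbitrary \emph{vector} decomposition $\n u=\mathbf{v}+\mathbf{w}$ and only uses the pointwise bound $|\n u|\le|\mathbf{v}|+|\mathbf{w}|$ inside $\int_{B_r^c}|\n u|\,|x|^{1-N}$, so no scalar minimizing decomposition of $|\n u|$ and no spherical averaging are needed (your versions of those steps are nonetheless correct).
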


\begin{proof}
Let $u\in (C_c^\infty (\RN))_{\rm rad}$ and ${\bf v}\in L^p(\RN)$ and ${\bf w}\in L^q(\RN)$ such that $\n u={\bf v}+{\bf w}$. Denote by $S^{N-1}$ the boundary of the $N$ dimensional sphere.
If $r\ge 1$,
\begin{align*}
|u(r)| & \le \int_r^{+\infty} |u'(\rho)| d\rho\\
&=  \frac{1}{|S^{N-1}|} \int_{B_r^c} \frac{|\n u |}{|x|^{N-1}}\\
&\le  \frac{1}{|S^{N-1}|} \left( \int_{B_r^c} \frac{| {\bf v} |}{|x|^{N-1}}+  \int_{B_r^c} \frac{| {\bf w} |}{|x|^{N-1}} \right)\\
&\le  \frac{1}{|S^{N-1}|} \left[ \| {\bf v} \|_p  \left(\int_{B_r^c} \frac{1}{|x|^{\frac{(N-1)p}{p-1}}}\right)^\frac{p-1}{p} + \| {\bf w} \|_q \left( \int_{B_r^c} \frac{1}{|x|^{\frac{(N-1)q}{q-1}}} \right)^\frac{q-1}{q}\right]\\
&\le C(N,p,q) \left( \frac{\| {\bf v} \|_p}{r^{\frac{N-p}{p}}} + \frac{\| {\bf w} \|_q}{r^{\frac{N-q}{q}}} \right)\\
&\le   \frac{C(N,p,q)}{r^{\frac{N-q}{q}}}(\| {\bf v} \|_p + \| {\bf w} \|_q).
\end{align*}
Passing to the infimum, we deduce that \eqref{eq:unifdec} holds for any $u\in (C_c^\infty (\RN))_{\rm rad}.$ By the density of $(C_c^\infty (\RN))_{\rm rad}$ and the convergence a.e. in $\RN$, we have that \eqref{eq:unifdec} is true for every $u \in \mathcal{W}_r$.
\end{proof}

\begin{proof}[Proof of Theorem \ref{th:compact}]
Let $\{u_n\}_n$ be a bounded sequence in $\mathcal{W}_r$.
By Lemma \ref{le:radiallemma} we have that $\lim_{|x|\to +\infty} |u_n (x)|=0$ uniformly with respect to $n$.\\
Up to a subsequence $\{u_n\}_n$ converges weakly to a function $u\in \mathcal{W}$.
\\
We prove that $u_n \to u$ a.e. in $\RN$.\\
For every $n\in \N$ and for every $K\subset\RN$ bounded, by \ref{en:op} Proposition \ref{pr:lplq}, $\n u_n \in L^p(K)$ and then $u_n  \in W^{1,\sigma} (K)$, with $\sigma=\min\{\a,p\}$.\\
Moreover,  if ${\bf v}_n \in L^p(\RN)$ and ${\bf w}_n \in L^q(\RN)$ such that $\n u_n= {\bf v}_n + {\bf w}_n$, we have
\begin{multline*}
  \| \n u_n \|_{L^p(K)} \le \| {\bf v}_n \|_{L^p(K)} + \|{\bf w}_n \|_{L^p(K)} \\
  \le \| {\bf v}_n \|_{L^p(K)} + C \|{\bf w}_n \|_{L^q(K)} \le C (\| {\bf v}_n \|_p + \|{\bf w}_n \|_q)
\end{multline*}
and, passing to the infimum,
\[
\| \n u_n \|_{L^p(K)} \le C \| \n u_n \|_{p,q}.
\]
If $|K| \le 1$ the constant $C$ does not depend on $K$ and then $\left\{u_n\right\}_n$ is bounded in $W^{1,\sigma} (K)$. Thus we get that $u_n \to u$ a.e. in $K$. Covering $\RN$ with  sets with measure less than 1, we deduce that $u_n \to u$ a.e. in $\RN$ and $u\in\mathcal{W}_r$.\\
Hence we apply Theorem \ref{th:BL} with $P(t)=|t|^\tau,$  $Q(t)=|t|^\a+|t|^{p^*}$, $v_n=u_n-u$ and $v=0,$ and we get that $\lim_n u_n =u $ (strongly) in $L^\tau(\RN)$.
\end{proof}

\section{Existence and multiplicity of solutions}\label{SefossiunafemminabonaiomitrombereiAntonioinunfienile}

In this section we prove our main existence and multiplicity results. From now on we suppose that all the assumptions of Theorem \ref{main} hold.

Let us define the functional $I: \W \to \R$ as:
\[
I(u)=\frac 12 \irn \phi (|\n u|^2)
+\frac 1\a \irn |u|^\a
-\frac 1s \irn |u|^s.
\]

\begin{proposition}
The functional $I$ is well defined and it is of class $C^1$.
\end{proposition}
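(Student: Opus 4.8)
The plan is to prove the three parts of the functional in the order: (i) $I$ is well defined on $\W$, (ii) $I$ is Gateaux differentiable with the expected derivative, and (iii) the Gateaux derivative is continuous, which together with Gateaux differentiability yields $I \in C^1(\W,\R)$. For the first part, I would estimate each of the three terms separately. The second term $\frac 1\a \irn |u|^\a$ is finite because $\|\cdot\|_\a$ is part of the norm on $\W$. The third term $\frac 1s \irn |u|^s$ is finite because, by hypothesis $\max\{q,\a\}<s<p^*$, we have $\a\le s\le p^*$, so $u\in L^s(\RN)$ by the continuous embedding $\W\hookrightarrow L^\tau(\RN)$ for $\tau\in[\a,p^*]$ (Theorem \ref{th:immersione} and the interpolation remark following it). For the first term $\frac 12\irn \phi(|\n u|^2)$, I would use \ref{phi3}: splitting $\RN$ into the region where $|\n u|\ge 1$ (using $\phi(t)\le Ct^{p/2}$, i.e. $\phi(|\n u|^2)\le C|\n u|^p$, integrable since $\n u\in L^p$ there by \ref{en:ulu} of Proposition \ref{pr:lplq}) and its complement (using $\phi(t)\le Ct^{q/2}$, so $\phi(|\n u|^2)\le C|\n u|^q$, integrable since $\n u\in L^q$ there). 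Thus $I$ is finite on all of $\W$.

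For Gateaux differentiability, I would fix $u,v\in\W$ and compute $\frac{d}{dt}I(u+tv)$ at $t=0$, expecting
\[
I'(u)[v]=\irn \phi'(|\n u|^2)\n u\cdot\n v+\irn |u|^{\a-2}u\,v-\irn |u|^{s-2}u\,v.
\]
The two lower-order terms are handled by the standard dominated-convergence argument, using the continuous embeddings into $L^\a$ and $L^s$ together with Hölder's inequality (the conjugate exponents $\a'$ and $s'$ of the difference quotients are controlled by the embedding into $L^\tau$, $\tau\in[\a,p^*]$). The principal-part term is the delicate one. Here the key is that by \ref{phi4} one has $\phi'(t)t\le \frac{s\mu}{2}\phi(t)$, which gives a growth bound on $\phi'$: combined with \ref{phi3}, this yields $\phi'(t)\lesssim t^{p/2-1}$ for large $t$ and $\phi'(t)\lesssim t^{q/2-1}$ for small $t$, so that the integrand $\phi'(|\n u|^2)\n u\cdot\n v$ is dominated (after splitting on $\Lambda_{\n u}$ and its complement, and applying Hölder with the duality pairing from \ref{en:dual} of Proposition \ref{pr:lplq}) by an $L^1$ function uniformly in the difference-quotient parameter. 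Dominated convergence then gives the Gateaux derivative.

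For continuity of $u\mapsto I'(u)$, I would show that if $u_n\to u$ in $\W$ then $I'(u_n)\to I'(u)$ in $\W'$. The lower-order terms are continuous by the continuous embeddings and the continuity of the Nemytskii operators $u\mapsto |u|^{\a-2}u$ (from $L^\a$ to $L^{\a'}$) and $u\mapsto |u|^{s-2}u$ (from $L^s$ to $L^{s'}$). The principal part again requires care: one shows $\phi'(|\n u_n|^2)\n u_n\to\phi'(|\n u|^2)\n u$ in the dual space $L^{p'}(\RN)\cap L^{q'}(\RN)$, using the growth bounds on $\phi'$ derived from \ref{phi3}--\ref{phi4}, the convergence $\n u_n\to\n u$ in $L^p+L^q$, and a generalized dominated convergence theorem (passing to a subsequence with an $L^p+L^q$-dominating function and using a.e. convergence along the split regions). \textbf{The main obstacle} is precisely this last step: the nonstandard $L^p+L^q$ structure of the gradient means the usual single-exponent dominated-convergence and Nemytskii-continuity arguments must be reorganized region-by-region (on $\Lambda_{\n u}$ where the $p$-growth is active, and on $\Lambda_{\n u}^c$ where the $q$-growth is active), and one must verify that the dominating functions assemble consistently across the two regions so that the duality estimate of \ref{en:dual} applies uniformly.
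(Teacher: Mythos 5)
Your proposal is correct, but it takes a different route from the paper only in the sense that the paper does not prove this proposition at all: its entire proof is the single line ``The conclusion follows easily from, for example, [D'Aprile--Siciliano, Lemma 2.2]'', a lemma tailored to functionals whose principal part has exactly this $L^p+L^q$ gradient structure. What you have written is, in effect, a self-contained reconstruction of what that citation encapsulates, and the ingredients you single out are the right ones: finiteness of the principal part via (\ref{phi3}) and the splitting $\n u\in L^p(\Lambda_{\n u})\cap L^q(\Lambda_{\n u}^c)$ from \ref{en:ulu} of Proposition \ref{pr:lplq}; the growth bound $\phi'(t)\le C\,t^{p/2-1}$ for $t\ge 1$ and $\phi'(t)\le C\,t^{q/2-1}$ for $t\le 1$, obtained by combining (\ref{phi4}) with (\ref{phi3}), which is precisely what places $\phi'(|\n u|^2)\n u$ in the dual $L^{p'}(\RN)\cap L^{q'}(\RN)$ of $L^p(\RN)+L^q(\RN)$ (one should check, as your region-by-region remark implicitly does, that $(p-1)q'<p$ on $\Lambda_{\n u}$ and $(q-1)p'>q$ on $\Lambda_{\n u}^c$, so that each piece lies in \emph{both} dual exponents); and the two-region dominated-convergence argument for differentiability and for continuity of $I'$. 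The trade-off is the usual one: the paper's citation keeps the exposition short at the cost of sending the reader elsewhere, while your direct argument makes visible exactly which hypotheses on $\phi$ (namely (\ref{phi3}) and (\ref{phi4})) are consumed at this step. Your honest flagging of the main technical nuisance --- that the sets where the $p$-growth versus the $q$-growth is active depend on the function, so dominating functions must be assembled consistently across the two regions before invoking the duality of \ref{en:dual} --- is exactly the point that the cited lemma is designed to package away.
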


\begin{proof}
The conclusion follows easily from, for example, \cite[Lemma 2.2]{DS}.
\end{proof}

We will find solutions of (\ref{eq}) as critical points of the functional $I$.

In the following proposition, we show that the functional $I$ satisfies the geometrical assumptions of the Mountain Pass Theorem. More precisely, we have:

\begin{proposition}\label{pr:MP}
The functional $I$ verifies the following properties:
\begin{enumerate}[label=\roman*)]
\item $I(0)=0$;
\item there exist $\rho,\bar c>0$ such that $I(u)\ge \bar c$, for any $u\in \W$ with $\|u\|= \rho$;
\item there exists $\bar u \in \W$ such that $I(\bar u)<0$.
\end{enumerate}
\end{proposition}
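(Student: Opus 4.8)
The plan is to treat the three items separately, the first being immediate. For i), assumption \ref{phi1} gives $\phi(0)=0$, so the three integrals defining $I(0)$ all vanish and $I(0)=0$. The real content is the mountain-pass geometry in ii) and iii): near the origin the two positive terms $\frac12\irn\phi(|\n u|^2)$ and $\frac1\a\irn|u|^\a$ dominate, while the negative term $-\frac1s\irn|u|^s$, being of strictly higher homogeneity since $s>\max\{q,\a\}$, is negligible for small norm yet drives $I$ to $-\infty$ along rays.

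For ii) the crucial point is to bound the gradient term from below by a power of $\|\n u\|_{p,q}$. From \ref{phi2}, writing $\Lambda_{\n u}=\{|\n u|>1\}$, I obtain
\[
\irn\phi(|\n u|^2)\ge c\left(\|\n u\|_{L^p(\Lambda_{\n u})}^p+\|\n u\|_{L^q(\Lambda_{\n u}^c)}^q\right).
\]
Setting $a=\|\n u\|_{L^p(\Lambda_{\n u})}$ and $b=\|\n u\|_{L^q(\Lambda_{\n u}^c)}$, property \ref{en:lupq} of Proposition \ref{pr:lplq} gives $\|\n u\|_{p,q}\le\max\{a,b\}$; distinguishing the cases $a\ge b$ and $b>a$ then yields $\irn\phi(|\n u|^2)\ge c\min\{\|\n u\|_{p,q}^p,\|\n u\|_{p,q}^q\}$, hence $\irn\phi(|\n u|^2)\ge c\|\n u\|_{p,q}^q$ whenever $\|\n u\|_{p,q}\le1$. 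On the sphere $\|u\|=\|u\|_\a+\|\n u\|_{p,q}=\rho\le1$ both summands are at most $\rho$ and at least one is at least $\rho/2$, so combining the above with the $L^\a$ term I get
\[
\frac12\irn\phi(|\n u|^2)+\frac1\a\irn|u|^\a\ge C_1\,\rho^{\max\{q,\a\}}.
\]
Finally, since $\a<s<p^*$, Theorem \ref{th:immersione} together with the ensuing interpolation remark gives $\W\hookrightarrow L^s(\RN)$, so $\frac1s\irn|u|^s\le\frac{C^s}{s}\|u\|^s=\frac{C^s}{s}\rho^s$. Because $s>\max\{q,\a\}$, choosing $\rho$ small makes the positive contribution dominate, whence $I(u)\ge\bar c>0$ for $\|u\|=\rho$.

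For iii) I would fix a nonzero $u_0\in C_c^\infty(\RN,\R)$ and study $I$ along the ray $tu_0$, $t>0$. As $u_0$ has compact support, $\n u_0\in L^p(\RN)\cap L^q(\RN)$, and \ref{phi3} yields $\phi(\tau)\le C(\tau^{p/2}+\tau^{q/2})$, so $\irn\phi(t^2|\n u_0|^2)\le C(t^p\|\n u_0\|_p^p+t^q\|\n u_0\|_q^q)$. Therefore
\[
I(tu_0)\le\frac C2\left(t^p\|\n u_0\|_p^p+t^q\|\n u_0\|_q^q\right)+\frac{t^\a}{\a}\|u_0\|_\a^\a-\frac{t^s}{s}\|u_0\|_s^s.
\]
Since $s>q>p$ and $s>\a$ while $\|u_0\|_s>0$, the last term eventually dominates and $I(tu_0)\to-\infty$ as $t\to+\infty$; taking $\bar u=t_0u_0$ with $t_0$ large gives $I(\bar u)<0$.

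The main obstacle is the gradient estimate in ii): the functional is built from $\phi$, which has two distinct growth rates, while the natural quantity $\|\n u\|_{p,q}$ lives in the sum space $L^p+L^q$, so the lower bound cannot be read off directly and must be routed through property \ref{en:lupq} and the case analysis above. Everything else is a homogeneity count, the only delicate point being the correct choice of the exponent $\max\{q,\a\}$ so that the embedding-controlled term $-\frac1s\irn|u|^s$ is genuinely of higher order than the positive part.
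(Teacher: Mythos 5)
Your proposal is correct and follows essentially the same route as the paper: item ii) is obtained from (\ref{phi2}) together with \ref{en:lupq} of Proposition \ref{pr:lplq} to bound the gradient term below by $c\,\|\n u\|_{p,q}^q$ for small norm, combined with the embedding $\W\hookrightarrow L^s(\RN)$ and the homogeneity comparison $s>\max\{q,\a\}$, while item iii) uses (\ref{phi3}) along a ray $t u_0$ with $u_0\in C_c^\infty(\RN)$. Your explicit case analysis on $a=\|\n u\|_{L^p(\Lambda_{\n u})}$ versus $b=\|\n u\|_{L^q(\Lambda_{\n u}^c)}$ merely spells out the step the paper writes compactly as $c\max\bigl(\int_{\L_{\n u}^c}|\n u|^q,\int_{\L_{\n u}}|\n u|^p\bigr)\ge c\|\n u\|_{p,q}^q$.
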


\begin{proof}
Trivially, $I(0)=0$.
\\
Let us check ii).
\\
If $\|u\|$ is sufficiently small, by (\ref{phi2}), \ref{en:lupq} of Proposition \ref{pr:lplq} and since $\W\hookrightarrow L^s(\RN)$, we have that
\begin{align*}
I(u)
& \ge c_1 \int_{\L_{\n u}^c} |\n u|^q
+  c_2 \int_{\L_{\n u}} |\n u|^p
+ \frac 1\a \irn |u|^\a
-\frac 1s \irn |u|^s
\\
& \ge c \max \left( \int_{\L_{\n u}^c} |\n u|^q , \int_{\L_{\n u}} |\n u|^p \right)
+ \frac 1\a \irn |u|^\a
-\frac 1s \irn |u|^s
\\
& \ge c\Big[\|\n u\|_{p,q}^q +\|u\|^\a_{\a} -\|u\|^s_s\Big]
\\
& \ge c \Big[\| u\|^{\max\{\a,q\}}  -\|u\|^s \Big] \ge \bar c.
\end{align*}
Let us check iii).
\\
Let $u\in C^\infty_c(\RN)$, then by (\ref{phi3}), for all $t >0$, we get
\begin{align*}
I(tu)
&\le C_1 \int_{\L_{\n (t u)}^c} |\n (tu)|^q
+  C_2 \int_{\L_{\n (t u)}} |\n (tu)|^p
+ \frac 1\a \irn |tu|^\a
-\frac 1s \irn |t u|^s
\\
&\le C \left[ t^q \irn |\n u|^q
+   t^p\irn |\n u|^p
+ t^\a \irn |u|^\a
-t^s \irn | u|^s \right].
\end{align*}
Therefore, $I(t u)<0$, for a $t$ sufficiently large.
\end{proof}

By Remark \ref{re:rad}, using the standard Palais' result (see \cite{P}), we infer that $\W_r$ is a natural constraint for the functional $I$. So we consider $I$ restricted to this space and we prove that here the Palais-Smale condition holds.

\begin{proposition}\label{pr:PS}
The functional $I|_{\W_r}$ satisfies the Palais-Smale condition.
\end{proposition}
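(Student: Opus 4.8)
The plan is to verify the Palais--Smale condition for $I|_{\W_r}$ by taking a sequence $\{u_n\}_n \subset \W_r$ with $I(u_n)$ bounded and $(I|_{\W_r})'(u_n) \to 0$, and producing a strongly convergent subsequence. The first step is to establish \emph{boundedness} of the sequence in $\W_r$. Using assumption \eqref{phi4}, which controls $\phi'(t)t$ from above by $\tfrac{s\mu}{2}\phi(t)$ with $0<\mu<1$, I would compute a suitable linear combination $I(u_n) - \tfrac{1}{s}\langle I'(u_n), u_n\rangle$ in the standard Ambrosetti--Rabinowitz fashion. The nonlinear term $-\tfrac1s\int|u|^s$ is designed to cancel, and the remaining terms involving $\phi$ and $|u|^\a$ should be coercive: from \eqref{phi4} the principal part contributes something like $(\tfrac12 - \tfrac{\mu}{2})\int \phi(|\n u_n|^2) \ge 0$ after accounting for $\tfrac1s\int\phi'(|\n u_n|^2)|\n u_n|^2 \cdot 2$, and the term $(\tfrac1\a - \tfrac1s)\int|u_n|^\a$ is positive since $\a < s$. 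Combined with the lower bounds \eqref{phi2} this yields a bound on $\|\n u_n\|_{p,q}$ and $\|u_n\|_\a$, hence on $\|u_n\|$.

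Once boundedness is secured, by reflexivity (Proposition \ref{pr:rifle}) I extract a weakly convergent subsequence $u_n \weakto u$ in $\W_r$. The crucial compactness input is Theorem \ref{th:compact}: since $\max\{q,\a\}<s<p^*$, the embedding $\W_r \hookrightarrow L^s(\RN)$ is compact, so $u_n \to u$ strongly in $L^s(\RN)$ (and a.e., up to a further subsequence). This lets me pass to the limit in the subcritical nonlinear term $\int |u_n|^{s-2}u_n \varphi$ and also handle the $L^\a$ term. The strategy for strong convergence is then to test the difference $\langle I'(u_n) - I'(u), u_n - u\rangle \to 0$: the right-hand-side nonlinearity contributes a term that vanishes by the strong $L^s$ convergence, the $|u|^{\a-2}u$ lower-order term is controlled similarly, and what remains is the principal (quasilinear) part
\[
\int_\RN \left[\phi'(|\n u_n|^2)\n u_n - \phi'(|\n u|^2)\n u\right]\cdot(\n u_n - \n u) \to 0.
\]

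The main obstacle is deducing strong convergence of the gradients in $L^p(\RN)+L^q(\RN)$ from this last limit. Here the strict convexity assumption \eqref{phi5}, which says $t\mapsto\phi(t^2)$ is strictly convex, is essential: it makes the map $\xi \mapsto \phi'(|\xi|^2)\xi$ strictly monotone, so the integrand above is pointwise nonnegative and vanishes only where $\n u_n = \n u$. I would therefore combine this monotonicity with a Brezis--Browder-type argument to conclude $\n u_n \to \n u$ a.e., and then upgrade to norm convergence. Because the gradient space is the nonstandard sum $L^p + L^q$ rather than a single Lebesgue space, the delicate point is that I cannot directly invoke a uniformly convex single-space argument; instead I expect to split the domain using the set $\L_{\n u}$ from Proposition \ref{pr:lplq} (controlling the high-gradient part in $L^p$ and the low-gradient part in $L^q$ via \eqref{phi2}) and apply a generalized dominated convergence / uniform integrability argument on each piece. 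Handling this sum-space structure carefully, rather than the passage to the a.e.\ limit itself, is where the real work lies.
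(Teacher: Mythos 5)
Your first two steps (boundedness via $I(u_n)-\tfrac1s I'(u_n)[u_n]$ together with \eqref{phi4}, then weak convergence by reflexivity and strong $L^s$ convergence by Theorem \ref{th:compact}) coincide with the paper's. The compactness step, however, is where your proposal diverges and where it has a genuine gap. You choose the monotone-operator route: test $I'(u_n)-I'(u)$ against $u_n-u$, use strict monotonicity of $\xi\mapsto\phi'(|\xi|^2)\xi$ from \eqref{phi5} to get a.e.\ convergence of the gradients, and then ``upgrade to norm convergence'' in $L^p(\RN)+L^q(\RN)$. That last upgrade is precisely the hard point, and you leave it as a programme rather than a proof. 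Hypothesis \eqref{phi5} gives only \emph{qualitative} strict monotonicity, with no quantitative modulus of the type $\bigl[\phi'(|\xi|^2)\xi-\phi'(|\eta|^2)\eta\bigr]\cdot(\xi-\eta)\ge c\,|\xi-\eta|^p$; so from the vanishing of the monotonicity integral you get (along a subsequence) $\n u_n\to\n u$ a.e., but a.e.\ convergence plus boundedness in $L^p+L^q$ does not yield norm convergence --- you would still need convergence of the energies $\irn\phi(|\n u_n|^2)\to\irn\phi(|\n u|^2)$ together with a Brezis--Lieb or uniform-convexity argument in the sum space. A second, smaller slip: the term $|u|^{\a-2}u$ cannot be ``controlled similarly'' to the $|u|^{s-2}u$ term, because Theorem \ref{th:compact} gives compactness only for $\a<\tau<p^*$, so there is no strong $L^\a$ convergence available at this stage; that term must instead be kept on the monotone side of the inequality (it is nonnegative when paired with $u_n-u$) and strong $L^\a$ convergence must be \emph{deduced}, not assumed.

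The paper sidesteps all of this with a convexity/energy argument in the spirit of Mih\u{a}ilescu--R\u{a}dulescu: writing $I=A-B$ with $A=A_1+A_2$ the convex part, the strong $L^s$ convergence gives $B'(u_n)\to B'(u_0)$ in $\W_r'$, hence $A'(u_n)\to B'(u_0)$ strongly in the dual; convexity yields $A(u_n)\le A(u_0)+A'(u_n)[u_n-u_0]$, whose last term vanishes by weak convergence, and weak lower semicontinuity of $A_1$ and $A_2$ separately forces $A_1(u_n)\to A_1(u_0)$ and $A_2(u_n)\to A_2(u_0)$. Strong convergence then follows from weak convergence plus convergence of these convex integrals: the Radon--Riesz property for $L^\a$ and, for the gradients, \cite[Lemma 2.3]{DS}, which is exactly the ``energy convergence implies norm convergence'' statement in $L^p+L^q$ that your sketch is missing. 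If you want to salvage your route, the cleanest fix is to import that same lemma (or prove a uniform-convexity statement for the Orlicz norm $\|\cdot\|^*_{p,q}$), at which point your argument becomes a valid, slightly longer alternative; as written, it does not close.
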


\begin{proof}
Let $\{u_n\}_n\subset \W_r$ be a PS-sequence for the $I$, namely for a suitable $\bar c\in \R$
\[
I(u_n)\to \bar c \quad \hbox{and} \quad I'(u_n)\to 0 \hbox{ in } \W_r'.
\]
Let us show that $\{u_n\}_n$ is bounded. Indeed, by (\ref{phi4}), we have
\begin{align*}
\bar c+o_n(1)\|u_n\|
&= I(u_n) - \frac 1s I'(u_n)[u_n]
\\
&=\irn \left[ \frac 12 \phi(|\n u_n|^2)- \frac 1s \phi'(|\n u_n|^2)|\n u_n|^2 \right]
+\left(\frac 1\a -\frac{1}{s}\right) \irn |u_n|^\a
\\
&\ge  \frac{1-\mu}{2}\irn  \phi(|\n u_n|^2)
+\left(\frac 1\a -\frac{1}{s}\right) \irn |u_n|^\a
\\
&\ge  c \Big[\min \left(\|\n u_n\|_{p,q}^q, \|\n u_n\|_{p,q}^p \right) +\|u_n\|_\a^\a\Big].
\end{align*}
Therefore, by Proposition \ref{pr:rifle} and Theorem \ref{th:compact}, there exists $u_0\in \W_r$ such that
\begin{align}
&u_n\rightharpoonup u_0, \quad \hbox{weakly in }\W_r,\label{eq:debole}
\\
&u_n \to u_0, \quad \hbox{in }L^s(\RN), \label{eq:fortels}
\\
&u_n \to u_0, \quad \hbox{a.e. in }\RN.\nonumber
\end{align}
Inspired by \cite{MR}, we write $I(u)=A(u)-B(u)$, where $A(u)=A_1(u)+A_2(u)$ and
\[
A_1(u)=\frac 12 \irn \phi(|\n u|^2), \quad
A_2(u)=\frac 1\a \irn |u|^\a,\quad
B(u)=\frac 1s \irn |u|^s.
\]
With these notations, we have
\[
A(u_n)-B(u_n)\to \bar c \quad \hbox{and} \quad A'(u_n)-B'(u_n)\to 0 \hbox{ in } \W_r'.
\]
By (\ref{eq:fortels}), we infer that
\[
B(u_n) \to B(u_0) \quad \hbox{and} \quad
B'(u_n)\to B'(u_0)\hbox{ in }\W_r'.
\]
Therefore
\begin{equation}\label{eq:AB}
A'(u_n)\to B'(u_0)\quad\hbox{in }\W_r'.
\end{equation}
Since $A$ is convex, we have
\[
A(u_n)\le A(u_0)+A'(u_n)[u_n-u_0],
\]
and so, by (\ref{eq:debole}) and (\ref{eq:AB}), we get
\[
\limsup_n A(u_n) \le A(u_0).
\]
Moreover, by the weak lower semicontinuity ($A$ is convex and continuous)
\[
A(u_0)\le \liminf_n A(u_n),
\]
and therefore
\begin{equation}\label{eq:AA}
A(u_n)\to A(u_0).
\end{equation}
By (\ref{eq:debole}) and arguing as in \cite[page 208]{LL}, we have
\begin{align}
&\n u_n\rightharpoonup \n u_0, \quad \hbox{weakly in }L^p(\RN)+L^q(\RN), \label{eq:debolelpq}
\\
&u_n\rightharpoonup u_0, \quad \hbox{weakly in }L^\a(\RN), \label{eq:debolel2}
\end{align}
and so by the weak lower semicontinuity ($A_1$ and $A_2$ are convex and continuous)
\begin{align*}
A_1(u_0)\le \liminf_n A_1(u_n),
\\
A_2(u_0)\le \liminf_n A_2(u_n).
\end{align*}
 This, together with (\ref{eq:AA}), implies that
\begin{align}
A_1(u_0)= \lim_n A_1(u_n), \label{eq:A1}
\\
A_2(u_0)= \lim_n A_2(u_n). \label{eq:A2}
\end{align}
By (\ref{eq:debolel2}) and (\ref{eq:A2}), we infer that
\begin{equation*}
u_n \to u_0,   \quad \hbox{in }L^\a(\RN).
\end{equation*}
By (\ref{eq:debolelpq}) and (\ref{eq:A1}) and by \cite[Lemma 2.3]{DS}, we have that
\begin{equation*}
\n u_n \to \n u_0,   \quad \hbox{in }L^p(\RN)+L^q(\RN).
\end{equation*}
Therefore
\[
u_n \to u_0,   \quad \hbox{in }\W_r
\]
and the proof is concluded.
\end{proof}

\begin{proof}[Proof of Theorem \ref{main}]
The existence of a nontrivial solution follows immediately by Propositions \ref{pr:MP} and \ref{pr:PS}.
\\
%
To find a nontrivial and non-negative solution, we repeat all the previous arguments showing the existence of a nontrivial solution $\bar u\in \W_r$ of the following problem
\begin{equation}\label{eq:troncata}
\left\{
\begin{array}{l}
-\n \cdot \left[\phi'(|\n u|^2)\n u  \right] +|u|^{\a-2}u =g(u), \quad x\in \RN,
\\
u(x) \to 0 , \quad \hbox{as }|x|\to +\infty,
\end{array}
\right.
\end{equation}
where $g:\R\to \R$ is so defined:
\[
g(t)=\left\{
\begin{array}{ll}
t^{s-1},& \hbox{if }t\ge 0,
\\
0,& \hbox{if }t< 0.
\end{array}
\right.
\]
Since $\bar u$ solves (\ref{eq:troncata}) and since, by Theorem \ref{th:w}, $\bar u^-=\min\{\bar u,0\}\in \W_r$, multiplying the equation by $\bar u^-$, we get
\[
\int_{\O^-}\phi'(|\n \bar u |^2)|\n \bar u|^2+|\bar u|^\a=0,
\]
where $\O^-=\{x\in \RN \mid \bar u(x)<0\}$. Since $\phi'(t)\ge 0$, for all $t\ge 0$, we argue that $\bar u\ge 0$ and so it is a nontrivial and non-negative solution of (\ref{eq}).
\end{proof}

\begin{proof}[Proof of Theorem \ref{infty}]
By the $\Z_2$-symmetric version of the Mountain Pass Theorem \cite{AR}, we need only to prove that there exist $\{V_n\}_n$, a sequence of finite dimensional subspaces of $\W_r$ with $\dim V_n=n$, and $\{R_n\}_n$, a sequence of positive numbers, such that $I(u)\le 0$ for all $u\in  V_n\setminus B_{R_n}$.\\
    Consider $\{\varphi_n\}_n$ a sequence of radially symmetric test functions such that, for any $n\ge 1,$ the functions $\varphi_1, \varphi_2,\ldots,\varphi_n$ are linearly independent.
    Denote by $V_n={\rm span}\{\varphi_1, \varphi_2,\ldots,\varphi_n\}\subset (C_c^{\infty}(\RN,\R))_{\rm rad}\subset\W_r.$
\\
By (\ref{phi3}) and since $V_n$ is a finite dimensional space of test functions, we conclude observing that, if $u\in V_n\setminus B_{R_n}$ and $R_n$ is sufficiently large,
\begin{multline*}
I(u)\le C\Big[ \|\n u\|_q^q +\|u\|_\a^\a - \|u\|_s^s\Big]\\
\le  C\Big[ \|u\|^q +\|u\|^\a - \|u\|^s\Big]\le C \Big[ R_n^q +R_n^\a - R_n^s\Big]\le 0.
\end{multline*}
\end{proof}

\subsection{Ground state solution in $\W_r$}

In this section, we will show how, requiring something slightly more on $\phi$, we can find a ground state solution in $\W_r$.

Let us suppose that $\phi$ satisfies:
\begin{enumerate}[label=(${\rm \Phi}$\arabic*$'$), ref=${\rm \Phi}$\arabic*$'$]\setcounter{enumi}{1}
\item \label{phi2'} there exists a positive constant $c$ such that
\[
\left\{
\begin{array}{ll}
c t^{\frac p2-1} \le \phi'(t), & \hbox{if }t\ge 1,
\\
c t^{\frac q2-1} \le \phi'(t), & \hbox{if }0\le t\le 1.
\end{array}
\right.
\]
\end{enumerate}
Of course (\ref{phi2'}) implies (\ref{phi2}).

Let us indicate with $\S$ the set of all nontrivial solutions of (\ref{eq}) in $\W_r$, namely
\[
\S =\{u\in \W_r\setminus \{0\} \mid I'(u)=0\}.
\]
By Theorem \ref{main}, we know that $\S\neq \emptyset$.

The following lemmas hold for the set $\S$.
\begin{lemma}\label{le:norma>c}
There exists a positive constant $\bar c>0$ such that $\|u\|\ge \bar c$, for all $u\in \S$.
\end{lemma}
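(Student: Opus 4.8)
The plan is to exploit the fact that every $u \in \S$ satisfies, in particular, the Nehari-type identity obtained by testing the equation with $u$ itself. Since $I'(u)[u]=0$, one gets
\[
\irn \phi'(|\n u|^2)|\n u|^2 + \irn |u|^\a = \irn |u|^s .
\]
The whole argument then reduces to bounding the left-hand side from below and the right-hand side from above by suitable powers of $\|u\|$, and comparing the exponents. Since $\a \le s < p^*$, the continuous embedding $\W \hookrightarrow L^s(\RN)$ of Theorem \ref{th:immersione} (and the interpolation remark following it) immediately controls the right-hand side: $\irn |u|^s = \|u\|_s^s \le C\|u\|^s$.

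For the left-hand side the gradient term is the delicate one. Using the inequality $\phi(t) \le 2\phi'(t)t$ (a consequence of (\ref{phi4}) and (\ref{phi5}), noted in the Remark after (\ref{phi5})) together with the lower growth condition (\ref{phi2}), I would estimate
\[
\irn \phi'(|\n u|^2)|\n u|^2 \ge \tfrac12 \irn \phi(|\n u|^2) \ge c\Big[\int_{\L_{\n u}}|\n u|^p + \int_{\L_{\n u}^c}|\n u|^q\Big],
\]
where $\L_{\n u}=\{|\n u|>1\}$. Property \ref{en:lupq} of Proposition \ref{pr:lplq}, namely $\|\n u\|_{p,q}\le \max\{\|\n u\|_{L^p(\L_{\n u})},\|\n u\|_{L^q(\L_{\n u}^c)}\}$, then shows that whichever of the two pieces realizes the maximum dominates $\|\n u\|_{p,q}$ raised to the corresponding power, so that the bracket is bounded below by $\min\{\|\n u\|_{p,q}^p,\|\n u\|_{p,q}^q\}$. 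Combining with the identity yields
\[
c\,\min\{\|\n u\|_{p,q}^p,\|\n u\|_{p,q}^q\} + \|u\|_\a^\a \le C\|u\|^s .
\]

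Finally I would argue in the small-norm regime. If $\|u\|=\|u\|_\a+\|\n u\|_{p,q}\le 1$, then both summands are $\le 1$, so $\min\{\|\n u\|_{p,q}^p,\|\n u\|_{p,q}^q\}=\|\n u\|_{p,q}^q$, and since $x\mapsto x^m$ with $m=\max\{\a,q\}$ decreases values on $[0,1]$ while $(a+b)^m\le 2^{m-1}(a^m+b^m)$, the left-hand side is bounded below by $c'\|u\|^{\max\{\a,q\}}$. This gives $c'\|u\|^{\max\{\a,q\}}\le C\|u\|^s$, and, since $u\neq 0$, dividing by $\|u\|^{\max\{\a,q\}}$ leads to $\|u\|^{s-\max\{\a,q\}}\ge c'/C$. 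As $\max\{\a,q\}<s$ by hypothesis, the exponent is positive and $\|u\|$ is bounded below by a positive constant; together with the trivial estimate in the regime $\|u\|>1$ this yields the claim. The main obstacle is the middle step: passing correctly from the two separate integrals over $\L_{\n u}$ and $\L_{\n u}^c$, with their different exponents $p$ and $q$, to a single clean lower bound in terms of the Orlicz sum-norm $\|\n u\|_{p,q}$, which is precisely where property \ref{en:lupq} and the distinction between the $\min$ and $\max$ of the exponents must be handled with care.
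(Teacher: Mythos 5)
Your proof is correct, and its skeleton coincides with the paper's: start from the Nehari identity $I'(u)[u]=0$, bound the gradient term below by splitting over $\L_{\n u}$ and $\L_{\n u}^c$, pass to the sum-norm via property \ref{en:lupq} of Proposition \ref{pr:lplq}, and conclude from $\max\{\a,q\}<s$. There is one substantive difference worth noting: the paper obtains the pointwise lower bound on $\phi'(|\n u|^2)|\n u|^2$ directly from the strengthened hypothesis (\ref{phi2'}), which is introduced precisely for this ground-state subsection, whereas you derive it from the inequality $\phi(t)\le 2\phi'(t)t$ (the Remark following (\ref{phi5}), i.e.\ a consequence of (\ref{phi4}) and (\ref{phi5})) combined with the original growth condition (\ref{phi2}). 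Your route therefore establishes the lemma without invoking (\ref{phi2'}) at all, which is a small but genuine economy of hypotheses. You are also more careful than the printed proof on one point: the paper jumps from $\max\bigl(\int_{\L_{\n u}^c}|\n u|^q,\int_{\L_{\n u}}|\n u|^p\bigr)$ to $c\|\n u\|_{p,q}^q$, a step that is only valid in the small-norm regime, while you correctly record the intermediate bound $\min\{\|\n u\|_{p,q}^p,\|\n u\|_{p,q}^q\}$ and then restrict to $\|u\|\le 1$ before identifying the minimum with the $q$-th power; the two proofs otherwise finish identically. A cosmetic difference is that you push everything onto $\|u\|$ via the embedding $\W\hookrightarrow L^s(\RN)$ on the right-hand side, while the paper keeps $\|u\|_s$ and applies the embedding at the very last inequality; both yield the same conclusion.
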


\begin{proof}
By (\ref{phi2'}) we have
\begin{align*}
\|u\|^s_{s}
&= \irn \phi'(|\n u|^2)|\n u|^2 +\irn |u|^\a
\\
&\ge c \max \left( \int_{\L_{\n u}^c} |\n u|^q , \int_{\L_{\n u}} |\n u|^p \right)
+  \irn |u|^\a
\\
&\ge c \Big[ \|\n u\|^q_{p,q} +\|u\|^\a_{\a}\Big]
\ge c \|u\|^{\max\{\a,q\}}
\ge c \|u\|^{\max\{\a,q\}}_{s}.
\end{align*}
\end{proof}

\begin{lemma}\label{le:I>c}
There exists a positive constant $\bar c>0$ such that $I(u)\ge \bar c$, for all $u\in \S$.
\end{lemma}

\begin{proof}
Let $u \in \S$. Repeating the arguments of the proof of Proposition \ref{pr:PS} and using by Lemma \ref{le:norma>c}, we have
\[
I(u)
= I(u) - \frac 1s I'(u)[u]
\ge  c \Big[\min \left(\|\n u\|_{p,q}^q, \|\n u\|_{p,q}^p \right) +\|u\|_\a^\a\Big]
\ge \bar c.
\]
\end{proof}

\begin{remark}
Let us indicate with $\Ne$ the Nehari manifold associated to the functional $I$, namely
\[
\Ne =\{u\in \W_r\setminus \{0\} \mid I'(u)[u]=0\}.
\]
Then Lemmas \ref{le:norma>c} and \ref{le:I>c} hold also for $\Ne$.
\end{remark}

By Lemma \ref{le:I>c}, we infer that
\[
\s=\inf_{u\in \S}I(u)>0,
\]
and the next theorem shows that this infimum is achieved.

\begin{theorem}
Assuming that $1< p<q<N$, $\max\{q,\a\}<s<p^*$, $1<\a\le p^*q'/p'$ and (\ref{phi1},\ref{phi2'},\ref{phi3}-\ref{phi5}), then in the space $\W_r$, there exists a ground state solution for the problem (\ref{eq}), namely there exists a nontrivial solution $\bar u\in \W_r$,  such that
\[
I(\bar u)= \min_{u\in \S}I(u).
\]
\end{theorem}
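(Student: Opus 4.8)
The plan is to realize the infimum $\sigma=\inf_{u\in\S}I(u)$ through a minimizing sequence, exploiting the elementary but crucial observation that every element of $\S$ is, tautologically, a Palais--Smale point of $I$. First I would recall that $\S\neq\emptyset$ by Theorem \ref{main} and that $\sigma\ge\bar c>0$ by Lemma \ref{le:I>c}, so $\sigma$ is finite and strictly positive; hence a minimizing sequence exists. I pick $\{u_n\}_n\subset\S$, that is $u_n\in\W_r\setminus\{0\}$ with $I'(u_n)=0$ and $I(u_n)\to\sigma$. Since $I'(u_n)=0$ for every $n$, this sequence trivially satisfies $I(u_n)\to\sigma$ and $I'(u_n)\to 0$ in $\W_r'$, so $\{u_n\}_n$ is a Palais--Smale sequence for $I|_{\W_r}$ at the level $\sigma$.

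By the Palais--Smale condition established in Proposition \ref{pr:PS}, up to a subsequence there exists $\bar u\in\W_r$ such that $u_n\to\bar u$ strongly in $\W_r$. Next I would invoke the regularity of the functional: since $I$ is of class $C^1$, both $I$ and $I'$ are continuous, whence $I(\bar u)=\lim_n I(u_n)=\sigma$ and $I'(\bar u)=\lim_n I'(u_n)=0$ in $\W_r'$. Thus $\bar u$ is a critical point of $I$ on $\W_r$ and, because $\W_r$ is a natural constraint (as already observed via Palais' principle), a genuine solution of (\ref{eq}). It remains only to guarantee that the limit is nontrivial, and here Lemma \ref{le:norma>c} does the job: each $u_n\in\S$ obeys $\|u_n\|\ge\bar c>0$, and the strong convergence $u_n\to\bar u$ transfers this bound to the limit, giving $\|\bar u\|\ge\bar c>0$, so $\bar u\neq 0$. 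Consequently $\bar u\in\S$ and $I(\bar u)=\sigma=\inf_{u\in\S}I(u)$, which is exactly the asserted minimum.

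The main, and essentially the only, obstacle is the compactness needed to upgrade the minimizing sequence to a strongly convergent one; but this is precisely the content of Proposition \ref{pr:PS}, so once the Palais--Smale condition is in hand the argument is routine. One subtlety worth checking explicitly is that the present statement replaces assumption (\ref{phi2}) by the stronger (\ref{phi2'}), which in turn implies (\ref{phi2}); therefore Proposition \ref{pr:PS} and all the earlier embedding and lower-semicontinuity results remain valid, and the compactness step goes through unchanged. The reinforced hypothesis (\ref{phi2'}) is only what powers Lemmas \ref{le:norma>c} and \ref{le:I>c}, and hence the positivity of $\sigma$ and the nontriviality of the limit, so the proof closes without further effort.
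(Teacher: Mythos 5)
Your proposal is correct and follows essentially the same route as the paper: the paper's own proof simply observes that a minimizing sequence in $\S$ is automatically a Palais--Smale sequence (since $I'(u_n)=0$) and invokes Proposition \ref{pr:PS}. You in fact supply details the paper leaves implicit --- passing to the limit via continuity of $I$ and $I'$, and ruling out $\bar u=0$ through Lemma \ref{le:norma>c} --- which is exactly the right way to complete the terse argument.
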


\begin{proof}
Let $\{u_n\}_n\subset \S$ be a minimizing sequence, namely
\[
I(u_n)\to \s \quad \hbox{and}\quad I'(u_n)=0.
\]
Then  $\{u_n\}_n$ is a PS-sequence for $I$ and we conclude by means of Proposition \ref{pr:PS}.
\end{proof}

\end{document}